\numberwithin{equation}{section}
\newtheorem{theorem}{Theorem}[section]
\newtheorem{lemma}[theorem]{Lemma}
\newtheorem{proposition}[theorem]{Proposition}
\theoremstyle{definition}
\newtheorem{remark}[theorem]{Remark}
\numberwithin{equation}{section}
\newcommand{\A}{\mathcal{A}}
\newcommand{\s}{\mathbb{S}}
\newcommand{\R}{\mathcal{R}}
\newcommand{\C}{\mathbb{C}}
\newcommand{\x}{\mathbf{x}}
\newcommand{\y}{\mathbf{y}}
\newcommand{\op}{\mathrm{op}}
\begin{document}
\title[Fock projections on vector-valued $L^p$-spaces]{Fock projections on vector-valued $L^p$-spaces with matrix weights}
\date{October 25, 2025.
%\newline \indent $^{*}$ Corresponding author.
}
\author[J. Chen, M. Wang]{Jiale Chen and Maofa Wang}
\address{Jiale Chen, School of Mathematics and Statistics, Shaanxi Normal University, Xi'an 710119, China.}
\email{jialechen@snnu.edu.cn}
\address{Maofa Wang, School of Mathematics and Statistics, Wuhan University, Wuhan 430072, China.}
\email{mfwang.math@whu.edu.cn}

%\dedicatory{This paper is dedicated to Professor ABCD}
\thanks{This work was supported by National Natural Science Foundation of China (Nos. 12501170, 12571145). }

\subjclass[2020]{Primary 32A25; Secondary 32A50, 42B20, 42B25}
\keywords{Fock projection, matrix weight, restricted $\A_p$-condition.}

%\date{Received: xxxxxx; Revised: yyyyyy; Accepted: zzzzzz.
%\newline \indent $^{*}$ Corresponding author}

\begin{abstract}
  \noindent In this paper, we characterize the $d\times d$ matrix weights $W$ on $\mathbb{C}^n$ such that the Fock projection $P_{\alpha}$ is bounded on the vector-valued spaces $L^p_{\alpha,W}(\mathbb{C}^n;\mathbb{C}^d)$ induced by $W$ and the Gaussian measures. It is proved that for $1\leq p\leq\infty$, the Fock projection $P_{\alpha}$ is bounded on $L^p_{\alpha,W}(\mathbb{C}^n;\mathbb{C}^d)$ if and only if $W$ satisfies a restricted $\mathcal{A}_p$-condition. Our result is new even in the scalar setting at the endpoint $p=\infty$.
\end{abstract}
\maketitle

%%%%%%%%%%%%%%%%%%%%%%%%%%%%%%%%%%%%%%%%%%%%%%%%%%%%%%%%%%%%%%%%%
%%%%%%%%%%%%%%%%%%%%%%%%%%%%%%%%%%%%%%%%%%%%%%%%%%%%%%%%%%%%%%%%%

\section{Introduction}
\allowdisplaybreaks[4]
%\bigskip

The purpose of this paper is to establish some weighted norm inequalities for the Fock projections on vector-valued $L^p$-spaces with matrix weights. We start from the basic definitions. Fix positive integers $n$ and $d$. A $d\times d$ matrix-valued function $W$ on $\C^n$ is said to be a matrix weight if
\begin{enumerate}
  \item [(i)] $W(z)$ is a positive and invertible operator on $\C^d$ for almost every $z\in\C^n$;
  \item [(ii)] both $W$ and $W^{-1}$ are locally integrable on $\C^n$.
\end{enumerate}
For such a weight $W$ and $1\leq p<\infty$, $\alpha>0$, the weighted space $L^p_{\alpha,W}(\C^n;\C^d)$ consists of $\C^d$-valued measurable functions $f$ on $\C^n$ such that
$$\|f\|^p_{L^p_{\alpha,W}(\C^n;\C^d)}:=\int_{\C^n}\left|W(z)f(z)\right|^p
e^{-\frac{p\alpha}{2}|z|^2}dv(z)<\infty,$$
where $dv$ is the Lebesgue measure on $\C^n$ and $|\cdot|$ is the usual Hilbert space norm on $\C^d$ (or $\C^n$). In the case $p=\infty$, the weighted space $L^{\infty}_{\alpha,W}(\C^n;\C^d)$ of $\C^d$-valued measurable functions $f$ is defined by the following norm:
$$\|f\|_{L^{\infty}_{\alpha,W}(\C^n;\C^d)}:=\mathrm{ess}\sup_{z\in\C^n}|W(z)f(z)|e^{-\frac{\alpha}{2}|z|^2}.$$
Note that the above weighted space $L^p_{\alpha,W}(\C^n;\C^d)$ is slightly different from the one in \cite{CW23,Is14}. The reason that we define $L^p_{\alpha,W}(\C^n;\C^d)$ in this way is that we can treat the endpoint case $p=\infty$ uniformly. It is easy to see that the subspace of $L^p_{\alpha,W}(\C^n;\C^d)$ consisting of $\C^d$-valued entire functions is closed. We denote this subspace by $F^p_{\alpha,W}(\C^n;\C^d)$ and call it weighted Fock space. If $W\equiv E_d$, the $d\times d$ identity matrix, then these spaces are denoted simply by $L^p_{\alpha}(\C^n;\C^d)$ and $F^p_{\alpha}(\C^n;\C^d)$.

It is well-known that the Fock projection $P_{\alpha}$, i.e. the orthogonal projection from $L^2_{\alpha}(\C^n;\C)$ onto the Fock space $F^2_{\alpha}(\C^n;\C)$, is an integral operator given by
$$P_{\alpha}f(z):=\int_{\C^n}f(u)\overline{K^{\alpha}_z(u)}d\lambda_{\alpha}(u), \quad z\in\C^n,$$
where $d\lambda_{\alpha}(u):=\left(\frac{\alpha}{\pi}\right)^ne^{-\alpha|u|^2}dv(u)$ is the Gaussian measure and $K^{\alpha}_z(u):=e^{\alpha\langle u,z\rangle}$ denotes the reproducing kernel of $F^2_{\alpha}(\C^n;\C)$ at $z$. Dostani\'{c} and Zhu \cite{DZ} proved that for all $1\leq p\leq\infty$, $P_{\alpha}$ is bounded on $L^p_{\alpha}(\C^n;\C)$.

A longstanding theme in harmonic analysis is the weighted theory of singular integral operators, which was initiated by Muckenhoupt \cite{Mu72}. It was proved in \cite{Mu72} that for the scalar-valued setting $d=1$, the Hardy--Littlewood maximal operator is bounded on $L^p(w)$ $(1<p<\infty)$ if and only if the weight $w$ satisfies what we call the Muckenhoupt $A_p$-condition today. Later, it was shown that the same condition also characterizes the boundedness of the Riesz projection (equivalently, the Hilbert transform) on $L^p(w)$; see \cite{CF74,HMW}. One direction to extend these results is to consider the vector-valued spaces with matrix weights. Treil and Volberg \cite{TV97} introduced the $A_2$-condition for matrix weights and proved that the Riesz projection is bounded on $L^2(W)$ if and only if the matrix weight $W$ satisfies the $A_2$-condition. Nazarov--Tre\u{\i}l' \cite{NT96} and Volberg \cite{Vo97} introduced the matrix $A_p$-condition and generalized Treil--Volberg's result to all $1<p<\infty$. Later, Christ and Goldberg \cite{CG01,Go03} studied the matrix weighted Hardy--Littlewood maximal operators and used them to establish the boundedness of a class of singular integral operators on $L^p(W)$ for matrix $A_p$-weight $W$. It was pointed out in \cite{Si61,TV97,Vo97} that the matrix weighted estimates for the Riesz projection is closely related to the theory of stationary processes, the invertibility of Toeplitz operators with matrix-valued symbols and a two-weight estimate for the Hilbert transform. Therefore, the matrix weighted theory of singular integral operators and related problems have drawn widespread attention recently; see \cite{BPW,BC,CIMPR,DPTV,IKP,LP,NPTV,Vu} and the references therein. In particular, Cruz-Uribe et al. \cite{CIMPR} established some matrix weighted weak (1,1) type estimates for the Hardy–Littlewood maximal function and Calder\'{o}n--Zygmund operators when the matrix weight satisfies the $A_1$-condition introduced by Frazier and Roudenko \cite{FR04}.

Similarly to the case of Calder\'{o}n--Zygmund operators, one can consider the weighted theory of the Fock projections. To determine the bounded and invertible products of Toeplitz operators on the Fock spaces $F^p_{\alpha}(\C^n;\C)$, Isralowitz \cite{Is14} proved that for $1<p<\infty$, the Fock projection $P_{\alpha}$ is bounded on the scalar-valued space $L^p_{\alpha,w}(\C^n;\C)$ if and only if $w$ satisfies the following restricted $\A_p$-condition: for some (or any) $r>0$,
$$\sup_{Q\subset\C^n:l(Q)=r}\left(\frac{1}{v(Q)}\int_Qw^pdv\right)^{1/p}\left(\frac{1}{v(Q)}\int_{Q}w^{-p'}dv\right)^{1/p'}<\infty.$$
Here and in the sequel, $Q$ denotes a cube in $\C^n$ with sides parallel to the coordinate axes whose side length is denoted by $l(Q)$, and $p'=p/(p-1)$ is the conjugate exponent of $p$. Later, Cascante, F\`{a}brega and Pel\'{a}ez \cite{CFP} proved that $P_{\alpha}$ is bounded on $L^1_{\alpha,w}(\C;\C)$ if and only if the weight $w$ satisfies the following restricted $\A_1$-condition: for some (or any) $r>0$,
$$\sup_{Q\subset\C:l(Q)=r}\frac{\int_Qwdv}{v(Q)\mathrm{ess}\inf_{u\in Q}w(u)}<\infty.$$

Motivated by the aforementioned works, the authors \cite{CW23} recently characterized the matrix weights $W$ on $\C$ such that the Fock projection $P_{\alpha}$ is bounded on the Hilbert spaces $L^2_{\alpha,W}(\C;\C^d)$. It was proved that $P_{\alpha}$ is bounded on $L^2_{\alpha,W}(\C;\C^d)$ if and only if the matrix weight $W$ satisfies the following $\A_{2,r}$-condition for some (or any) $r>0$:
\begin{equation}\label{A2r}
\sup_{Q\subset\C:l(Q)=r}\left\|\left(\frac{1}{v(Q)}\int_{Q}W^2dv\right)^{1/2}
\left(\frac{1}{v(Q)}\int_{Q}W^{-2}dv\right)^{1/2}\right\|_{\op}<\infty.
\end{equation}
Here and in the sequel, for a $d\times d$ matrix $M$, $\|M\|_{\op}$ denotes its operator norm acting on $\C^d$. In this paper, we are going to extend this result to the full range $1\leq p\leq\infty$ and to the matrix weights defined on $\C^n$. Our main result establishes the Fock space analogue of \cite{Go03,Vo97}.

As stated in \cite{Vo97}, the matrix $\A_p$-condition does not have the form as in \eqref{A2r} if $p\neq 2$. We will follow the line in \cite{Vo97} to define some $\A_{p,r}$-condition for matrix weights. Then we show that for $1\leq p\leq\infty$, the Fock projection $P_{\alpha}$ is bounded on $L^{p}_{\alpha,W}(\C^n;\C^d)$ if and only if $W$ satisfies the $\A_{p,r}$-condition for some (or any) $r>0$; see Theorem \ref{main}. More precisely, via a class of integral operators induced by the normalized reproducing kernels of $F^2_{\alpha}(\C^n;\C)$, we show that if $P_{\alpha}$ is bounded on $L^{p}_{\alpha,W}(\C^n;\C^d)$, then the weight $W$ satisfies the $\A_{p,r}$-condition for any $r>0$. Conversely, based on some estimates for $\A_{p,r}$-weights and a duality argument, we prove that if $W$ satisfies the $\A_{p,r}$-condition for some $r>0$, then the matrix weighted maximal Fock projection $P^+_{\alpha,W}$, defined by
$$P^+_{\alpha,W}(f)(z):=
\int_{\C^n}\left|W(z)W^{-1}(u)f(u)\right|\left|K^{\alpha}_z(u)\right|d\lambda_{\alpha}(u),$$
is bounded from $L^p_{\alpha}(\C^n;\C^d)$ into $L^p_{\alpha}(\C^n;\C)$, which implies the boundedness of $P_{\alpha}$ on $L^p_{\alpha,W}(\C^n;\C^d)$. In particular, at the endpoint $p=\infty$, our result is new even in the scalar setting.

The rest part of this paper is organized as follows. In Section \ref{pre}, we recall some preliminary results and introduce the restricted $\A_p$-condition for matrix weights. Then we state and prove our main result in Section \ref{ma}.

Throughout the paper, the notation $\phi_1\lesssim \phi_2$ (or $\phi_2\gtrsim \phi_1$) means that there exists a nonessential constant $c>0$ such that $\phi_1\leq c\phi_2$. If $\phi_1\lesssim \phi_2\lesssim \phi_1$, then we write $\phi_1\asymp \phi_2$. For a subset $E\subset\C^n$, we use $\chi_E$ to denote the characteristic function of $E$. Given $z\in\C^n$ and $r>0$, $Q_r(z)$ denotes the cube centered at $z$ with side length $l(Q)=r$. Finally, we write $p'$ for the conjugate exponent of $p$ for $1\leq p\leq\infty$, i.e. $\frac{1}{p}+\frac{1}{p'}=1$. In particular, $1'=\infty$ and $\infty'=1$.

\section{Preliminaries and definitions}\label{pre}

In this section, we recall some preliminaries and introduce the restricted $\A_p$-condition for matrix weights.

Let $\alpha>0$ and $W$ be a $d\times d$ matrix weight. We use $\langle\cdot,\cdot\rangle_{\alpha}$ to denote the pairing defined as follows:
$$\langle f,g\rangle_{\alpha}:=\int_{\C^n}\langle f(z),g(z)\rangle e^{-\alpha|z|^2}dv(z),$$
where $f$ and $g$ are $\C^d$-valued measurable functions. It is well-known that, with respect to the pairing $\langle\cdot,\cdot\rangle_{\alpha}$, the dual space of $L^p_{\alpha,W}(\C^n;\C^d)$ $(1\leq p<\infty)$ can be represented as
\begin{equation}\label{dual-p}
\left(L^p_{\alpha,W}(\C^n;\C^d)\right)^*=L^{p'}_{\alpha,W^{-1}}(\C^n;\C^d).
\end{equation}

We now introduce the restricted $\A_p$-conditions for matrix weights. To this end, we consider the more general situation of norm-valued functions. Let $z\mapsto \rho_z,\ z\in\C^n$ be a function whose values are Banach space norms on $\C^d$. We assume this function to be measurable in the sense that for any $\x\in\C^d$, the function $z\mapsto \rho_z(\x)$ is measurable. For convenience, we will say the norm-valued function $z\mapsto \rho_z$ to be a metric and denote it by $\rho$.

Let $\rho$ be a metric. For $z\in\C^n$, the dual norm $(\rho_z)^*$ of $\rho_z$ is defined by
$$(\rho_z)^*(\x):=\sup_{\y\in\C^d\setminus\{0\}}\frac{|\langle\x,\y\rangle|}{\rho_z(\y)},\quad \x\in\C^d.$$
Since $\C^d$ is reflexive, we have $(\rho_z)^{**}=\rho_z$. The dual metric $\rho^*$ is defined pointwisely by $\rho^*_z=(\rho_z)^*$. For $1\leq p<\infty$ and a cube $Q\subset\C^n$, consider the norms
$$\rho_{p,Q}(\x):=\left(\frac{1}{v(Q)}\int_Q\left(\rho_z(\x)\right)^pdv(z)\right)^{1/p},\quad \x\in\C^d$$
and
$$\rho_{\infty,Q}(\x):=\mathrm{ess}\sup_{z\in Q}\rho_z(\x),\quad \x\in\C^d.$$
The following lemma was proved in \cite[Proposition 1.1]{Vo97} for the case $1<p<\infty$. The case $p=1$ or $p=\infty$ is similar and we omit the proof here.
\begin{lemma}\label{geq}
Let $1\leq p\leq\infty$ and $\rho$ be a metric. Then for any $\x\in\C^d$ and any cube $Q\subset\C^n$,
$$\rho^*_{p',Q}(\x)\geq\left(\rho_{p,Q}\right)^*(\x).$$
\end{lemma}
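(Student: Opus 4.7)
The plan is to start from the pointwise duality inequality $|\langle \mathbf{x}, \mathbf{y}\rangle| \leq \rho_z^*(\mathbf{x})\, \rho_z(\mathbf{y})$, which holds for every $z \in \mathbb{C}^n$ by definition of the dual norm. Since the left-hand side does not depend on $z$, I will average this bound over the cube $Q$, apply H\"older's inequality to decouple the $\mathbf{x}$-dependence from the $\mathbf{y}$-dependence, and finally take the supremum over $\mathbf{y}$.

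Concretely, fix $\mathbf{x} \in \mathbb{C}^d$ and an arbitrary $\mathbf{y} \in \mathbb{C}^d \setminus \{0\}$. Averaging the pointwise inequality over $Q$ gives
$$|\langle \mathbf{x}, \mathbf{y}\rangle| \;\leq\; \frac{1}{v(Q)}\int_Q \rho_z^*(\mathbf{x})\,\rho_z(\mathbf{y})\, dv(z).$$
Next I apply H\"older's inequality with exponents $p'$ and $p$ to the right-hand side to get
$$\frac{1}{v(Q)}\int_Q \rho_z^*(\mathbf{x})\,\rho_z(\mathbf{y})\, dv(z) \;\leq\; \rho^*_{p',Q}(\mathbf{x})\, \rho_{p,Q}(\mathbf{y}).$$
For $1 < p < \infty$ this is the standard H\"older bound; for $p=1$ one uses the trivial $L^{\infty}$--$L^1$ pairing, replacing the first factor by $\rho^*_{\infty,Q}(\mathbf{x})$, which matches the convention $1' = \infty$ introduced in the paper. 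Because $\rho_z$ is a norm and $\mathbf{y} \neq 0$, we have $\rho_{p,Q}(\mathbf{y}) > 0$ (allowing the value $+\infty$ on one side causes no harm), so dividing through yields
$$\frac{|\langle \mathbf{x}, \mathbf{y}\rangle|}{\rho_{p,Q}(\mathbf{y})} \;\leq\; \rho^*_{p',Q}(\mathbf{x}).$$

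Taking the supremum over $\mathbf{y} \in \mathbb{C}^d \setminus \{0\}$ on the left produces $(\rho_{p,Q})^*(\mathbf{x})$ by definition, and the inequality $(\rho_{p,Q})^*(\mathbf{x}) \leq \rho^*_{p',Q}(\mathbf{x})$ follows. There is no serious obstacle; the only things to be careful about are the joint measurability of $z \mapsto \rho_z^*(\mathbf{x})\rho_z(\mathbf{y})$ (which follows from the measurability assumption built into the definition of a metric) and the separate treatment of the endpoint $p=1$, both of which are routine.
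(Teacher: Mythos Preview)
Your proof is correct and is precisely the standard argument: pointwise duality, averaging, H\"older, then the supremum over $\mathbf{y}$. The paper itself does not give a proof of this lemma; it cites \cite[Proposition~1.1]{Vo97} for the case $1<p<\infty$ and states that the case $p=1$ is similar, so your write-up in fact fills in what the paper omits.
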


For $1\leq p\leq\infty$ and $r>0$, a metric $\rho$ is called an $\A_{p,r}$-metric if there exists some constant $C>0$ such that the opposite inequality
$$\rho^*_{p',Q}(\x)\leq C\left(\rho_{p,Q}\right)^*(\x)$$
holds for any $\x\in\C^d$ and any cube $Q\subset\C^n$ with $l(Q)=r$. The $\A_{p,r}$-constant of $\rho$, denoted by $[\rho]_{\A_{p,r}}$, is the least suitable constant $C$. Let $W$ be a $d\times d$ matrix weight. For $1\leq p\leq\infty$ and $r>0$, we say that $W$ is an $\A_{p,r}$-weight if the metric $\rho$ defined by
$$\rho_{z}(\x)=\left|W(z)\x\right|,\quad \x\in\C^d,\quad z\in\C^n$$
is an $\A_{p,r}$-metric. Moreover, we write $[W]_{\A_{p,r}}=[\rho]_{\A_{p,r}}$.

It was pointed out in \cite[p. 449]{Vo97} (see also \cite[Proposition 1.2]{Go03}) that for a Banach space norm $\rho_z$ on $\C^d$, there exists a $d\times d$ self-adjoint matrix $M_z$, which is a positive and invertible operator on $\C^d$, such that
$$\rho_z(\x)\leq|M_z\x|\leq\sqrt{d}\cdot\rho_z(\x),\quad \x\in\C^d.$$
Then the dual norm $(\rho_z)^*$ satisfies
$$\left|M^{-1}_z\x\right|\leq(\rho_z)^*(\x)\leq\sqrt{d}\cdot\left|M^{-1}_z\x\right|,\quad \x\in\C^d.$$
We will call the matrix $M_z$ the reducing operator of $\rho_z$. It is now possible to state the $\A_{p,r}$-condition in terms of the reducing operators. Let $1\leq p\leq\infty$ and $\rho$ be a metric. For any cube $Q\subset\C^n$, we use $\R_Q$ and $\R^{\star}_Q$ to denote the reducing operators of $\rho_{p,Q}$ and $\rho^*_{p',Q}$, respectively. Consequently,
\begin{equation}\label{reduce}
	\rho_{p,Q}(\x)\leq|\R_Q\x|\leq\sqrt{d}\cdot\rho_{p,Q}(\x),\quad \x\in\C^d,
\end{equation}
and
\begin{equation}\label{reduce-s}
	\rho^*_{p',Q}(\x)\leq|\R^{\star}_Q\x|\leq\sqrt{d}\cdot\rho^*_{p',Q}(\x),\quad \x\in\C^d.
\end{equation}
Combining these inequalities with the definition of the $\A_{p,r}$-metric, we obtain that $\rho$ is an $\A_{p,r}$-metric if and only if
$$\sup_{Q\subset\C^n:l(Q)=r}\|\R_Q\R^{\star}_Q\|_{\op}<\infty.$$
Moreover,
\begin{equation}\label{R-Apr}
	[\rho]_{\A_{p,r}}\leq\sup_{Q\subset\C^n:l(Q)=r}\|\R_Q\R^{\star}_Q\|_{\op}\leq d[\rho]_{\A_{p,r}}.
\end{equation}
Based on this characterization, the $\A_{p,r}$ matrix weights can be described by integral averages that is more like the scalar case (see \cite[Lemma 1.3]{Ro03} and \cite[Proposition 6.5]{BC}): for $1<p<\infty$, $W\in\A_{p,r}$ if and only if
$$\sup_{Q\subset\C^n:l(Q)=r}\left(\frac{1}{v(Q)}\int_Q\left(\frac{1}{v(Q)}\int_Q\left\|W(z)W^{-1}(u)\right\|^{p'}_{\op}dv(u)
\right)^{\frac{p}{p'}}dv(z)\right)^{\frac{1}{p}}<\infty;$$
$W\in\A_{1,r}$ if and only if
$$\sup_{Q\subset\C^n:l(Q)=r}\mathrm{ess}\sup_{z\in Q}\frac{1}{v(Q)}\int_Q\left\|W^{-1}(z)W(u)\right\|_{\op}dv(u)<\infty;$$
and, $W\in\A_{\infty,r}$ if and only if
$$\sup_{Q\subset\C^n:l(Q)=r}\mathrm{ess}\sup_{z\in Q}\frac{1}{v(Q)}\int_Q\left\|W(z)W^{-1}(u)\right\|_{\op}dv(u)<\infty.$$

\begin{remark}
Let $W$ be a matrix weight and let the metric $\rho$ be defined by $\rho_{z}(\x)=\left|W(z)\x\right|$. Then the reducing operators of $\rho_{2,Q}$ and $\rho^*_{2,Q}$ can be calculated as follows:
$$\R_{Q}=\left(\frac{1}{v(Q)}\int_QW^2dv\right)^{1/2}
\quad\mathrm{and}\quad
\R^{\star}_{Q}=\left(\frac{1}{v(Q)}\int_QW^{-2}dv\right)^{1/2},$$
which gives the $\A_{2,r}$-condition \eqref{A2r}.
\end{remark}

\section{The main result}\label{ma}

In this section, we state and prove our main result, which characterizes the boundedness of $P_{\alpha}$ on $L^p_{\alpha,W}(\C^n;\C^d)$ for all $1\leq p\leq\infty$. Moreover, the corresponding norm estimate for $P_{\alpha}$ is established. Recall that the Fock projection $P_{\alpha}$ is defined by
$$P_{\alpha}f(z)=\int_{\C^n}f(u)\overline{K^{\alpha}_z(u)}d\lambda_{\alpha}(u), \quad z\in\C^n,$$
and the matrix weighted maximal Fock projection $P^+_{\alpha,W}$ is defined by
$$P^+_{\alpha,W}(f)(z)=
\int_{\C^n}\left|W(z)W^{-1}(u)f(u)\right|\left|K^{\alpha}_z(u)\right|d\lambda_{\alpha}(u),\quad z\in\C^n.$$

Our main result reads as follows.

\begin{theorem}\label{main}
Let $\alpha>0$, $1\leq p\leq\infty$, and let $W$ be a $d\times d$ matrix weight on $\C^n$. The following conditions are equivalent:
\begin{enumerate}
	\item [(a)] $P_{\alpha}$ is bounded on $L^p_{\alpha,W}(\C^n;\C^d)$;
	\item [(b)] $P^+_{\alpha,W}:L^p_{\alpha}(\C^n;\C^d)\to L^p_{\alpha}(\C^n;\C)$ is bounded;
	\item [(c)] $W$ is an $\A_{p,r}$-weight for any $r>0$;
	\item [(d)] $W$ is an $\A_{p,r}$-weight for some $r>0$.
\end{enumerate}
Moreover, for any fixed $r>0$, there exists $c=c(\alpha,p,r,n)>0$ such that
\begin{equation*}\label{Pnorm}
\left(\frac{\alpha r^2}{\pi}\right)^ne^{-n\alpha r^2}[W]_{\A_{p,r}}^{1/2}
\leq\|P_{\alpha}\|\leq\left\|P^+_{\alpha,W}\right\|
\leq cd^{c_p}[W]_{\A_{p,r}}^{c(1+\log[W]_{\A_{p,r}})},
\end{equation*}
where $c_p=7/2$ for $1<p<\infty$, and $c_1=c_{\infty}=9/2$.
\end{theorem}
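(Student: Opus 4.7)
The plan is to establish the equivalences via the cycle $(b)\Rightarrow(a)\Rightarrow(c)\Rightarrow(d)\Rightarrow(b)$; $(c)\Rightarrow(d)$ is immediate. The implication $(b)\Rightarrow(a)$ is pointwise domination: for $f\in L^p_{\alpha,W}(\C^n;\C^d)$ and $g:=W^{1/p}f\in L^p_\alpha(\C^n;\C^d)$ (which have equal norms), the triangle inequality under the integral defining $P_\alpha$ gives $|W^{1/p}(z)P_\alpha f(z)|\le P^+_{\alpha,W}g(z)$ pointwise; integrating in $L^p_\alpha(\C^n;\C)$ yields $\|P_\alpha f\|_{L^p_{\alpha,W}}\le\|P^+_{\alpha,W}\|\,\|f\|_{L^p_{\alpha,W}}$.

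For $(a)\Rightarrow(c)$ together with the lower bound, fix $r>0$, $z_0\in\C^n$, and $Q:=Q_r(z_0)$. The basic tool is the identity $|K_{z_0}^\alpha(u)|^2 e^{-\alpha|u|^2}=e^{\alpha|z_0|^2-\alpha|u-z_0|^2}$, which localizes the Fock kernel to a neighborhood of $z_0$. Testing $P_\alpha$ on $f_{\y}(u):=\chi_Q(u)K_{z_0}^\alpha(u)\y$ and pairing against the holomorphic element $K_{z_0}^\alpha\x$ (which is fixed by $P_\alpha$), the reproducing property collapses the pairing to the explicit expression $\langle\y,\x\rangle\,e^{\alpha|z_0|^2}\int_Q e^{-\alpha|u-z_0|^2}dv(u)$. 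Hölder in the dual representation \eqref{dual-p} (or \eqref{dual-1} when $p=1$) supplies the upper bound $\|P_\alpha\|\,\|f_{\y}\|_{L^p_{\alpha,W}}\,\|K_{z_0}^\alpha\x\|_{L^{p'}_{\alpha,W^{-p'/p}}}$, in which the two factor norms compare to $e^{\alpha|z_0|^2/2}(v(Q))^{1/p}\rho_{p,Q}(\y)$ and $e^{\alpha|z_0|^2/2}(v(Q))^{1/p'}\rho^*_{p',Q}(\x)$ — the latter after a geometrically-decaying sum of cube averages that controls the global Gaussian integral. Selecting $\y,\x$ in relation to the singular directions of $\R_Q,\R^\star_Q$ then converts this into the bound $\|\R_Q\R^\star_Q\|\lesssim\|P_\alpha\|^2$ with explicit $\alpha,r,n$-dependence; by \eqref{R-Apr} this is precisely the $\A_{p,r}$-condition together with the stated lower bound on $\|P_\alpha\|$.

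For $(d)\Rightarrow(b)$ I would partition $\C^n$ into a disjoint grid of cubes $\{Q_j\}$ with $l(Q_j)=r$, and bound the positive kernel $\|W^{1/p}(z)W^{-1/p}(u)\|\cdot|K_z^\alpha(u)|$ of $P^+_{\alpha,W}$ cubewise. On each pair $(Q_k,Q_j)$ the matrix factor is controlled, on average, by the reducing-operator product $\|\R_{Q_k}\R^\star_{Q_j}\|$: for adjacent cubes the $\A_{p,r}$-condition gives $\lesssim\cac_{p,r}(W)$ directly, while for cubes at separation $N$ (in grid units) telescoping through intermediate cubes yields polynomial-in-$N$ control, which is absorbed by the rapid Gaussian decay $|K_z^\alpha(u)|e^{-\alpha(|z|^2+|u|^2)/2}=e^{-\alpha|z-u|^2/2}$ of the Fock kernel. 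This reduces the matrix-weighted bound to a scalar positive-kernel inequality on $L^p_\alpha(\C^n;\C)$, to which a Schur test applies with auxiliary functions built from $|\R_{Q_j}\cdot|$-type quantities. A self-improvement (reverse-Hölder) estimate for $\A_{p,r}$-weights, iterated logarithmically many times, then produces the $\cac_{p,r}(W)^{c(1+\log\cac_{p,r}(W))}$-dependence in the final norm bound. The case $p=1$ uses \eqref{dual-1} in place of \eqref{dual-p} and essential-sup control from the $\A_{1,r}$-condition, which is slightly more delicate and accounts for the larger exponent $c_1=9/2$ compared with $c_p=7/2$.

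The principal obstacle is the sufficiency direction. Because $W^{1/p}(z)$ and $W^{-1/p}(u)$ do not commute, the argument relies entirely on the reducing-operator machinery from Section \ref{pre}, and the cross-cube propagation of the local $\A_{p,r}$-control (needed whenever the Gaussian tail of $K_z^\alpha$ has not yet decayed) is the delicate bookkeeping at the heart of the proof. Extracting the sharp $\log\cac_{p,r}(W)$-factor additionally demands a self-improvement estimate for matrix $\A_{p,r}$-weights in the Fock setting — the matrix analogue of a Gehring-type lemma — which does not appear to be directly available in the literature and must be established from scratch.
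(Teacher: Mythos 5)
Your outline of (b)$\Rightarrow$(a) matches the paper, but the necessity direction (a)$\Rightarrow$(c) as you describe it does not prove the $\A_{p,r}$-condition. Pairing $P_{\alpha}f_{\y}$ against $K^{\alpha}_{z_0}\x$ and applying H\"older in the duality \eqref{dual-p} yields, after normalization, an inequality of the form $|\langle \x,\y\rangle|\lesssim \|P_{\alpha}\|\,\rho_{p,Q}(\y)\,\rho^*_{p',Q}(\x)$; taking the supremum over $\y$ gives $(\rho_{p,Q})^*(\x)\lesssim \|P_{\alpha}\|\,\rho^*_{p',Q}(\x)$. That is the \emph{trivial} direction, which already holds with constant $1$ and no hypothesis on $P_{\alpha}$ by Lemma \ref{geq}; the $\A_{p,r}$-condition is the \emph{reverse} inequality $\rho^*_{p',Q}\leq C(\rho_{p,Q})^*$, and no choice of $\x,\y$ adapted to the singular directions of $\R_Q,\R^{\star}_Q$ can extract it from an estimate in which both factor norms sit on the majorizing side. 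What is actually needed is a statement that test functions of the form $\chi_Q k^{\alpha}_u\y$ nearly \emph{attain} the dual norm of $\chi_Q k^{\alpha}_u\x$, with loss controlled by $\|P_{\alpha}\|$; the paper gets this from the localized operators $P_{\alpha,u,r}$ (Proposition \ref{relation}, Lemma \ref{ness-key}), which act boundedly on the dual space and project onto exactly that family of test functions. A second, independent problem with your version: you compare the global norm $\|K^{\alpha}_{z_0}\x\|_{L^{p'}_{\alpha,W^{-p'/p}}}$ to the single-cube quantity $\rho^*_{p',Q}(\x)$ via ``a geometrically-decaying sum of cube averages,'' but controlling $\rho^*_{p',Q_r(\nu)}(\x)$ for distant cubes $\nu$ by $\rho^*_{p',Q}(\x)$ requires the very propagation estimate (Lemma \ref{nunu'}) that is a consequence of the $\A_{p,r}$-condition you are trying to prove; with only local integrability of $W^{-p'/p}$ this global norm need not even be finite. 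Truncating the kernel to $Q$ on both sides, at the cost of the perturbation analysis in Proposition \ref{relation}, is precisely how the paper avoids this circularity.

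Your sketch of (d)$\Rightarrow$(b) has the right skeleton (grid of side-$r$ cubes, Gaussian decay $e^{-\frac{\alpha}{2}|z-u|^2}$, reducing operators, duality against $g\in L^{p'}_{\alpha}$), but two points are off. First, the cross-cube comparison is not ``polynomial in $N$'': Lemma \ref{nunu'} gives $\|\R_{Q_r(\nu)}\R^{-1}_{Q_r(\nu')}\|\leq\sqrt{d}\,(3^{2n}\cac_{p,3r}(\rho))^{\frac{\sqrt{2n}}{r}|\nu-\nu'|}$, i.e.\ exponential growth, which the Gaussian still absorbs. Second, and more importantly, no reverse-H\"older or Gehring-type self-improvement for matrix $\A_{p,r}$-weights is needed anywhere: the factor $\cac_{p,r}(W)^{c(1+\log\cac_{p,r}(W))}$ in the final bound comes solely from the elementary optimization $\sum_{\nu}e^{-\frac{\alpha}{4}|\nu|^2}c^{|\nu|}\lesssim c^{C+\frac{2}{\alpha}\log c}(1+\log^{2n-1}c)$ applied with $c=(3^{2n}\cac)^{p'\sqrt{2n}/r}$, i.e.\ from trading the exponential growth of the reducing-operator ratios against the Gaussian decay of the kernel. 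Building the ``matrix Gehring lemma'' you propose would be substantial extra work and is not required; as written, your proof of the sufficiency direction rests on an unestablished lemma that the correct argument does not use.
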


As a byproduct of Theorem \ref{main}, we know that the class of $\A_{p,r}$-weight (more generally, the class of $\A_{p,r}$-metric) is actually independent of the choice of $r$. In fact, we will give a direct and quantitative proof of this fact; see Proposition \ref{direct}.

For the proof of Theorem \ref{main}, we first note that if $P^+_{\alpha,W}:L^p_{\alpha}(\C^n;\C^d)\to L^p_{\alpha}(\C^n;\C)$ is bounded, then for $f\in L^p_{\alpha,W}(\C^n;\C^d)$,
\begin{align*}
\|P_{\alpha}f\|^p_{L^p_{\alpha,W}(\C^n;\C^d)}
&=\int_{\C^n}\left|W(z)\int_{\C^n}f(u)\overline{K^{\alpha}_z(u)}d\lambda_{\alpha}(u)\right|^pe^{-\frac{p\alpha}{2}|z|^2}dv(z)\\
&\leq\int_{\C^n}\left(\int_{\C^n}\left|W(z)f(u)\right||K^{\alpha}_z(u)|d\lambda_{\alpha}(u)\right)^p
  e^{-\frac{p\alpha}{2}|z|^2}dv(z)\\
&=\left\|P^+_{\alpha,W}\left(Wf\right)\right\|^p_{L^p_{\alpha}(\C^n;\C)}\\
&\leq\left\|P^+_{\alpha,W}\right\|^p\left\|Wf\right\|^p_{L^p_{\alpha}(\C^n;\C^d)}\\
&=\left\|P^+_{\alpha,W}\right\|^p\left\|f\right\|^p_{L^p_{\alpha,W}(\C^n;\C^d)}.
\end{align*}
Therefore, the implication (b)$\Longrightarrow$(a) of Theorem \ref{main} holds, and
$$\|P_{\alpha}\|\leq\left\|P^+_{\alpha,W}\right\|.$$
The rest part of the paper is devoted to proving the implications (a)$\Longrightarrow$(c) and (d)$\Longrightarrow$(b).

To establish the implication (a)$\Longrightarrow$(c), we consider a class of integral operators induced by the normalized reproducing kernels. Given $\alpha>0$ and $u\in\C^n$, we use $k^{\alpha}_u$ to denote the normalized reproducing kernel of $F^2_{\alpha}(\C^n;\C)$, that is,
$$k^{\alpha}_u(z)=e^{\alpha\langle z,u\rangle-\frac{\alpha}{2}|u|^2},\quad z\in\C^n.$$
For fixed $r>0$, define an operator $P_{\alpha,u,r}$ for $\C^d$-valued functions $f$ by
$$P_{\alpha,u,r}f=\chi_{Q_r(u)}k^{\alpha}_u\int_{Q_r(u)}f\overline{{k}^{\alpha}_u}d\lambda_{\alpha}.$$
The following proposition reveals the relation between the boundedness of $P_{\alpha}$ and $P_{\alpha,u,r}$.

\begin{proposition}\label{relation}
Let $\alpha,r>0$, $1\leq p\leq\infty$, $u\in\C^n$, and let $W$ be a $d\times d$ matrix weight on $\C^n$. Suppose that $P_{\alpha}$ is bounded on $L^p_{\alpha,W}(\C^n;\C^d)$. Then $P_{\alpha,u,r}$ is bounded on $L^p_{\alpha,W}(\C^n;\C^d)$, and
$$\|P_{\alpha,u,r}\|\leq e^{n\alpha r^2/2}\|P_{\alpha}\|.$$
\end{proposition}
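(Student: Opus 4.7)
The plan is to exploit a power-series expansion that relates the kernel of $P_{\alpha,u,r}$ to the Fock kernel of $P_\alpha$. The starting point is the identity
\[
k_u^\alpha(z)\overline{k_u^\alpha(w)} = e^{\alpha\langle z,w\rangle}\cdot e^{-\alpha\langle z-u,w-u\rangle},
\]
which follows immediately by expanding $\langle z-u,w-u\rangle = \langle z,w\rangle - \langle z,u\rangle - \langle u,w\rangle + |u|^2$ and using the definition of $k_u^\alpha$. Writing $P_{\alpha,u,r}$ as an integral operator against $d\lambda_\alpha$, its kernel equals the Fock kernel $e^{\alpha\langle z,w\rangle}$, restricted to $Q_r(u)\times Q_r(u)$ and multiplied by the extra factor $e^{-\alpha\langle z-u,w-u\rangle}$. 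I would then expand this factor as the multinomial series
\[
e^{-\alpha\langle z-u,w-u\rangle} = \sum_{\beta\in\N^n}\frac{(-\alpha)^{|\beta|}}{\beta!}(z-u)^\beta\overline{(w-u)}^\beta,
\]
whose decisive feature is that each summand factors as a function of $z$ times a function of $w$, enabling us to separate variables.

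To make this precise I would proceed by duality. Thanks to the pairings \eqref{dual-p}--\eqref{dual-1}, it suffices to estimate $|\langle P_{\alpha,u,r}f,g\rangle_\alpha|$ for $g$ in the appropriate dual space. Computing $\langle P_{\alpha,u,r}f,g\rangle_\alpha$ as a double integral on $Q_r(u)\times Q_r(u)$, substituting the kernel identity, and interchanging the series with the double integral---which is legitimate because $|e^{\alpha\langle z,w\rangle}|$ is bounded on the compact region $Q_r(u)\times Q_r(u)$ and the multinomial series converges uniformly there---yields the decomposition
\[
\langle P_{\alpha,u,r}f,g\rangle_\alpha = \sum_{\beta\in\N^n}\frac{(-\alpha)^{|\beta|}}{\beta!}\langle P_\alpha f_\beta, g_\beta\rangle_\alpha,
\]
where $f_\beta(w) := \chi_{Q_r(u)}(w)\overline{(w-u)}^\beta f(w)$ and $g_\beta(z) := \chi_{Q_r(u)}(z)\overline{(z-u)}^\beta g(z)$.

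For each $\beta$, H\"{o}lder's inequality combined with the hypothesis gives $|\langle P_\alpha f_\beta,g_\beta\rangle_\alpha| \le \|P_\alpha\|\,\|f_\beta\|_{L^p_{\alpha,W}}\,\|g_\beta\|_{L^{p'}_{\alpha,W^{-p'/p}}}$. Since $|w_j - u_j|\le r/\sqrt{2}$ for $w\in Q_r(u)$, we have $|(w-u)^\beta| \le (r/\sqrt{2})^{|\beta|}$, so $\|f_\beta\|_{L^p_{\alpha,W}} \le (r/\sqrt{2})^{|\beta|}\|f\|_{L^p_{\alpha,W}}$, and analogously for $g_\beta$. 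Summing the series yields
\[
|\langle P_{\alpha,u,r}f,g\rangle_\alpha| \le \|P_\alpha\|\,\|f\|\,\|g\|\sum_{\beta\in\N^n}\frac{(\alpha r^2/2)^{|\beta|}}{\beta!} = e^{n\alpha r^2/2}\|P_\alpha\|\,\|f\|\,\|g\|,
\]
where the final equality comes from the factorization $\sum_{\beta\in\N^n}(\alpha r^2/2)^{|\beta|}/\beta! = \prod_{j=1}^n e^{\alpha r^2/2}$. Duality then converts this pairing estimate into $\|P_{\alpha,u,r}f\|_{L^p_{\alpha,W}} \le e^{n\alpha r^2/2}\|P_\alpha\|\,\|f\|_{L^p_{\alpha,W}}$, which is the claimed inequality. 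The only technical point requiring care is the Fubini/Tonelli interchange of summation and integration, which is routine since $Q_r(u)$ is compact and the series converges uniformly there.
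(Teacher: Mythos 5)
Your proof is correct and is essentially the paper's argument: both rest on the identity $k^{\alpha}_u(z)\overline{k^{\alpha}_u(w)}=e^{\alpha\langle z,w\rangle}e^{-\alpha\langle z-u,w-u\rangle}$, the multinomial expansion of the second factor to separate variables, the bound $|w_j-u_j|\leq r/\sqrt{2}$ on $Q_r(u)$, and resummation to $e^{n\alpha r^2/2}$. The only cosmetic difference is that the paper isolates the $\beta=0$ term as $\chi_Q P_{\alpha}(\chi_Q f)$ and estimates the remainder directly in norm, whereas you keep the full series and pass through the duality pairing; both give the same constant.
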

\begin{proof}
Write $Q=Q_r(u)$ temporarily to save the notation. Since $P_{\alpha}$ is bounded on $L^p_{\alpha,W}(\C^n;\C^d)$, it is clear that for any $f\in L^p_{\alpha,W}(\C^n;\C^d)$,  
\begin{equation}\label{clear}
\|\chi_QP_{\alpha}(\chi_Qf)\|_{L^p_{\alpha,W}(\C^n;\C^d)}
\leq\|P_{\alpha}\|\|f\|_{L^p_{\alpha,W}(\C^n;\C^d)}.
\end{equation}
We now estimate the norm of $P_{\alpha,u,r}f-\chi_QP_{\alpha}(\chi_Qf)$. For any $z\in\C$,
\begin{align*}
	&P_{\alpha,u,r}f(z)-\chi_Q(z)P_{\alpha}(\chi_Qf)(z)\\
	=&\,\chi_Q(z)\int_Qf(\zeta)e^{\alpha \langle z,\zeta\rangle}\left(\sum_{k=1}^{\infty}\frac{(-1)^k\alpha^k}{k!}
	\langle u-z,u-\zeta\rangle^k\right)d\lambda_{\alpha}(\zeta);
\end{align*}
see the proof of \cite[Proposition 2.2]{CW23}. Note that for any $k\geq1$,
\begin{align*}
\langle u-z,u-\zeta\rangle^k&=\left(\sum_{j=1}^n(u_j-z_j)\overline{(u_j-\zeta_j)}\right)^k\\
	&=\sum_{\substack{k_1,k_2,\cdots,k_n\geq0\\k_1+\cdots +k_n=k}}\frac{k!}{k_1!\cdots k_n!}
	  \prod_{j=1}^n(u_j-z_j)^{k_j}(\overline{u_j-\zeta_j})^{k_j}.
\end{align*}
For $u\in\C^n$ and $k_1,\cdots,k_n\geq0$, write $\tau_u^{k_1,\cdots,k_n}(z)=\prod_{j=1}^n(u_j-z_j)^{k_j}$. Then
{\footnotesize
\begin{align*}
&P_{\alpha,u,r}f(z)-\chi_Q(z)P_{\alpha}(\chi_Qf)(z)\\
=&\chi_Q(z)\int_Qf(\zeta)e^{\alpha\langle z,\zeta\rangle}
\left(\sum_{k=1}^{\infty}\frac{(-1)^k\alpha^k}{k!}
\sum_{\substack{k_1,k_2,\cdots,k_n\geq0\\k_1+\cdots +k_n=k}}\frac{k!}{k_1!\cdots k_n!}
\tau_u^{k_1,\cdots,k_n}(z)\overline{\tau_u^{k_1,\cdots,k_n}(\zeta)}\right)d\lambda_{\alpha}(\zeta)\\
=&\sum_{k=1}^{\infty}\frac{(-1)^k\alpha^k}{k!}\sum_{\substack{k_1,k_2,\cdots,k_n\geq0\\k_1+\cdots +k_n=k}}\frac{k!}{k_1!\cdots k_n!}
\chi_Q(z)\tau_u^{k_1,\cdots,k_n}(z)
\int_Q\overline{\tau_u^{k_1,\cdots,k_n}(\zeta)}f(\zeta)e^{\alpha\langle z,\zeta\rangle}d\lambda_{\alpha}(\zeta)\\
=&\sum_{k=1}^{\infty}\frac{(-1)^k\alpha^k}{k!}\sum_{\substack{k_1,k_2,\cdots,k_n\geq0\\k_1+\cdots +k_n=k}}\frac{k!}{k_1!\cdots k_n!}
\chi_Q(z)\tau_u^{k_1,\cdots,k_n}(z)P_{\alpha}\left(\chi_Q\overline{\tau_u^{k_1,\cdots,k_n}}f\right)(z).
\end{align*}
}
Noting that for any $k\geq1$ and any $k_1,\cdots,k_n\geq0$ with $k_1+\cdots+k_n=k$,
$$\left|\tau_u^{k_1,\cdots,k_n}(z)\right|=\prod_{j=1}^n|u_j-z_j|^{k_j}\leq2^{-k/2}r^k,\quad \forall z\in Q,$$
we have
$$\left\|\chi_Q\tau_u^{k_1,\cdots,k_n}f\right\|_{L^p_{\alpha,W}(\C^n;\C^d)}\leq2^{-k/2}r^{k}\|f\|_{L^p_{\alpha,W}(\C^n;\C^d)},$$
%\begin{align*}
%\left\|\chi_Q\tau_u^{k_1,\cdots,k_n}f\right\|^p_{L^p_{\alpha,W}(\C^n;\C^d)}
%&=\int_Q\left| W^{1/p}(z)f(z)\right|^p\prod_{j=1}^n|u_j-z_j|^{k_jp}e^{-\frac{p\alpha}{2}|z|^2}dv(z)\\
%&\leq2^{-kp/2}r^{kp}\|f\|^p_{L^p_{\alpha,W}(\C^n;\C^d)},
%\end{align*}
which implies that
\begin{align*}
&\left\|P_{\alpha,u,r}f-\chi_QP_{\alpha}(\chi_Qf)\right\|_{L^p_{\alpha,W}(\C^n;\C^d)}\\
\leq&\sum_{k=1}^{\infty}\frac{\alpha^k}{k!}\sum_{\substack{k_1,k_2,\cdots,k_n\geq0\\k_1+\cdots +k_n=k}}\frac{k!}{k_1!\cdots k_n!}
  \left\|\chi_Q\tau_u^{k_1,\cdots,k_n}P_{\alpha}\left(\chi_Q\overline{\tau_u^{k_1,\cdots,k_n}}f\right)\right\|_{L^p_{\alpha,W}(\C^n;\C^d)}\\
\leq&\sum_{k=1}^{\infty}\frac{\alpha^kr^k}{2^{k/2}k!}
  \sum_{\substack{k_1,k_2,\cdots,k_n\geq0\\k_1+\cdots +k_n=k}}\frac{k!}{k_1!\cdots k_n!}
  \left\|P_{\alpha}\left(\chi_Q\overline{\tau_u^{k_1,\cdots,k_n}}f\right)\right\|_{L^p_{\alpha,W}(\C^n;\C^d)}\\
\leq&\|P_{\alpha}\|\sum_{k=1}^{\infty}\frac{\alpha^kr^k}{2^{k/2}k!}
  \sum_{\substack{k_1,k_2,\cdots,k_n\geq0\\k_1+\cdots +k_n=k}}\frac{k!}{k_1!\cdots k_n!}
  \left\|\chi_Q\overline{\tau_u^{k_1,\cdots,k_n}}f\right\|_{L^p_{\alpha,W}(\C^n;\C^d)}\\
\leq&\|P_{\alpha}\|\sum_{k=1}^{\infty}\frac{\alpha^kr^{2k}}{2^kk!}
  \sum_{\substack{k_1,k_2,\cdots,k_n\geq0\\k_1+\cdots +k_n=k}}\frac{k!}{k_1!\cdots k_n!}\|f\|_{L^p_{\alpha,W}(\C^n;\C^d)}\\
=&\left(e^{\frac{n\alpha r^2}{2}}-1\right)\|P_{\alpha}\|\|f\|_{L^p_{\alpha,W}(\C^n;\C^d)}.
\end{align*}
Combining this with \eqref{clear} yields the desired result.
\end{proof}

The following lemma is easy to verify, so we omit the proof.

\begin{lemma}\label{self}
Let $\alpha,r>0$, $1\leq p\leq\infty$, $u\in\C^n$, and let $W$ be a $d\times d$ matrix weight on $\C^n$ such that $P_{\alpha}$ is bounded on $L^p_{\alpha,W}(\C^n;\C^d)$. Then for $f\in L^p_{\alpha,W}(\C^n;\C^d)$ and $g\in L^{p'}_{\alpha,W^{-1}}(\C^n;\C^d)$,
$$\langle P_{\alpha,u,r}f,g\rangle_{\alpha}=\langle f,P_{\alpha,u,r}g\rangle_{\alpha}.$$
\end{lemma}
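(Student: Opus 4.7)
My plan is a direct Fubini computation; the whole proof should come down to carefully tracking complex conjugations. Writing $Q=Q_r(u)$ for brevity, the key observation is that $P_{\alpha,u,r}f(z)$ factors pointwise as the scalar $\chi_Q(z)k^\alpha_u(z)$ times the constant vector $\int_Q f(\zeta)\overline{k^\alpha_u(\zeta)}\,d\lambda_\alpha(\zeta)\in\C^d$. I would expand both sides of the claimed identity from the definition of $\langle\cdot,\cdot\rangle_\alpha$, pulling this scalar out of the $\C^d$-inner product (unchanged when it sits in the first slot, conjugated when it sits in the second) and then moving the pairing inside the inner integral.

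After applying Fubini, I expect to obtain
\[
\langle P_{\alpha,u,r}f,g\rangle_{\alpha}=\int_Q\int_Q k^\alpha_u(z)\,\overline{k^\alpha_u(\zeta)}\,\langle f(\zeta),g(z)\rangle\,d\lambda_\alpha(\zeta)\,e^{-\alpha|z|^2}\,dv(z),
\]
and, noting that the inner $\overline{k^\alpha_u(\zeta)}$ factor in $P_{\alpha,u,r}g$ picks up a second conjugation when pulled out of the second slot,
\[
\langle f,P_{\alpha,u,r}g\rangle_{\alpha}=\int_Q\int_Q \overline{k^\alpha_u(z)}\,k^\alpha_u(\zeta)\,\langle f(z),g(\zeta)\rangle\,d\lambda_\alpha(\zeta)\,e^{-\alpha|z|^2}\,dv(z).
\]
Since the combined reference measure $(\alpha/\pi)^n e^{-\alpha|\zeta|^2}\,dv(\zeta)\,e^{-\alpha|z|^2}\,dv(z)$ is manifestly symmetric in $(z,\zeta)$, renaming the variables $z\leftrightarrow\zeta$ in the second display turns its integrand into that of the first, and the desired identity falls out.

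The only nontrivial ingredient is the Fubini step, but all integrations are localized to the bounded cube $Q$ on which $|k^\alpha_u|$ and $e^{-\alpha|\cdot|^2}$ are uniformly bounded; absolute integrability of the resulting double integral then reduces to a routine H\"older estimate pairing $f\in L^p_{\alpha,W}(\C^n;\C^d)$ against $g\in(L^p_{\alpha,W}(\C^n;\C^d))^*$, using the local integrability of $W$ and $W^{-1}$ guaranteed by the definition of matrix weight. I do not anticipate any real obstacle---this is essentially a bookkeeping exercise in complex conjugations, which is presumably why the authors call the verification easy.
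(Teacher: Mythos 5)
Your computation is correct and is exactly the verification the paper intends (the authors omit the proof as ``easy to verify''): both pairings reduce, after tracking the conjugations, to the same double integral over $Q_r(u)\times Q_r(u)$ against the symmetric measure $(\alpha/\pi)^n e^{-\alpha|z|^2-\alpha|\zeta|^2}\,dv(z)\,dv(\zeta)$. The one small imprecision is in your Fubini justification: the double integral factors as $\bigl(\int_{Q_r(u)}|f|\,d\mu\bigr)\bigl(\int_{Q_r(u)}|g|\,d\mu\bigr)$ up to bounded factors, and the finiteness of $\int_{Q_r(u)}|f|\,dv$ for $p<2$ requires local integrability of $\|W^{-1}\|^{p'/p}$ rather than of $W^{-1}$ alone --- but this is exactly the condition for $P_{\alpha,u,r}f$ to be well defined at all, so it is a presupposition of the lemma's statement rather than a gap in your argument.
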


\begin{lemma}\label{ness-key}
Let $\alpha,r>0$, $1\leq p\leq\infty$, $u\in\C^n$, and let $W$ be a $d\times d$ matrix weight on $\C^n$ such that $P_{\alpha}$ is bounded on $L^p_{\alpha,W}(\C^n;\C^d)$. Let the function $f$ be defined by
$$f=c\chi_{Q_r(u)}k^{\alpha}_u\x,$$
where $c\in\C$ and $\x\in\C^d$. Then
$$\sup_{\y\in\C^d\setminus\{0\}}\frac{|\langle f,\chi_{Q_r(u)}k^{\alpha}_u\y\rangle_{\alpha}|}
		{\|\chi_{Q_r(u)}k^{\alpha}_u\y\|_{L^{p'}_{\alpha,W^{-1}}(\C^n;\C^d)}}
		\geq\left(\frac{\alpha r^2}{\pi}\right)^ne^{-n\alpha r^2}\|P_{\alpha}\|^{-1}\|f\|_{L^p_{\alpha,W}(\C^n;\C^d)}.$$
\end{lemma}
\begin{proof}
It is easy to see that $P_{\alpha,u,r}f=c_{\alpha,u,r}f$, where $c_{\alpha,u,r}=\int_{Q_r(u)}|k^{\alpha}_u|^2d\lambda_{\alpha}$. Suppose first that $1\leq p<\infty$. Using the duality \eqref{dual-p} and Lemma \ref{self}, we obtain that
\begin{align*}
\|f\|_{L^p_{\alpha,W}(\C^n;\C^d)}&=\sup_{\phi}|\langle f,\phi\rangle_{\alpha}|\\
&=\sup_{\phi}\left|\left\langle P_{\alpha,u,r}f,\frac{1}{c_{\alpha,u,r}}\phi\right\rangle_{\alpha}\right|\\
&=\sup_{\phi}\left|\left\langle f,\frac{1}{c_{\alpha,u,r}}P_{\alpha,u,r}\phi\right\rangle_{\alpha}\right|,
\end{align*}
where the supremum is taken over $\phi\in L^{p'}_{\alpha,W^{-1}}(\C^n;\C^d)$ with $\|\phi\|\leq1$. Write $\y_{\phi}=\int_{Q_r(u)}\phi\overline{k^{\alpha}_u}d\lambda_{\alpha}$. Then $P_{\alpha,u,r}\phi=\chi_{Q_r(u)}k^{\alpha}_u\y_{\phi}$. Therefore,
$$
\|f\|_{L^p_{\alpha,W}(\C^n;\C^d)}
=\sup_{\phi:\y_{\phi}\neq0}\left|\left\langle f,\frac{1}{c_{\alpha,u,r}}\chi_{Q_r(u)}k^{\alpha}_u\y_{\phi}\right\rangle_{\alpha}\right|.
$$
By Proposition \ref{relation} and Lemma \ref{self}, we know that the operator $P_{\alpha,u,r}$ is bounded on $L^{p'}_{\alpha,W^{-1}}(\C^n;\C^d)$, which implies that
\begin{align*}
\left\|\frac{1}{c_{\alpha,u,r}}\chi_{Q_r(u)}k^{\alpha}_u\y_{\phi}\right\|_{L^{p'}_{\alpha,W^{-1}}(\C^n;\C^d)}
&=\frac{1}{c_{\alpha,u,r}}\left\|P_{\alpha,u,r}\phi\right\|_{L^{p'}_{\alpha,W^{-1}}(\C^n;\C^d)}\\
&\leq\frac{1}{c_{\alpha,u,r}}\|P_{\alpha,u,r}\|\\
&\leq\frac{1}{c_{\alpha,u,r}}e^{\frac{n\alpha r^2}{2}}\|P_{\alpha}\|.
\end{align*}
It is clear that $c_{\alpha,u,r}=\left(\frac{\alpha}{\pi}\right)^n\int_{Q_r(u)}e^{-\alpha|z-u|^2}dv(z)
\geq \left(\frac{\alpha r^2}{\pi}\right)^ne^{-n\alpha r^2/2}$. Hence
$$\left\|\frac{1}{c_{\alpha,u,r}}\chi_{Q_r(u)}k^{\alpha}_u\y_{\phi}\right\|_{L^{p'}_{\alpha,W^{-1}}(\C^n;\C^d)}
\leq \left(\frac{\pi}{\alpha r^2}\right)^ne^{n\alpha r^2}\|P_{\alpha}\|.$$
Consequently,
\begin{align*}
\|f\|_{L^p_{\alpha,W}(\C^n;\C^d)}
&\leq \left(\frac{\pi}{\alpha r^2}\right)^ne^{n\alpha r^2}\|P_{\alpha}\|
  \sup_{\phi:\y_{\phi}\neq0}\frac{\left|\left\langle f,\chi_{Q_r(u)}k^{\alpha}_u\frac{\y_{\phi}}{c_{\alpha,u,r}}\right\rangle_{\alpha}\right|}
  {\|\chi_{Q_r(u)}k^{\alpha}_u\frac{\y_{\phi}}{c_{\alpha,u,r}}\|_{L^{p'}_{\alpha,W^{-1}}(\C^n;\C^d)}}\\
&\leq \left(\frac{\pi}{\alpha r^2}\right)^ne^{n\alpha r^2}\|P_{\alpha}\|
  \sup_{\y\in\C^d\setminus\{0\}}\frac{\left|\left\langle f,\chi_{Q_r(u)}k^{\alpha}_u\y\right\rangle_{\alpha}\right|}
  {\|\chi_{Q_r(u)}k^{\alpha}_u\y\|_{L^{p'}_{\alpha,W^{-1}}(\C^n;\C^d)}},
\end{align*}
which is exactly what we want. In the case $p=\infty$, the boundedness of $P_{\alpha}$ on $L^{\infty}_{\alpha,W}(\C^n;\C^d)$ together with Proposition \ref{relation} and Lemma \ref{self} implies that $P_{\alpha,u,r}$ is bounded on $L^1_{\alpha,W^{-1}}(\C^n;\C^d)$. Therefore, using the duality $\left(L^1_{\alpha,W^{-1}}(\C^n;\C^d)\right)^*=L^{\infty}_{\alpha,W}(\C^n;\C^d)$, we can obtain the desired result by following the same procedure.
\end{proof}

We are now ready to prove the implication (a)$\Longrightarrow$(c) of Theorem \ref{main}.

\begin{proof}[Proof of Theorem \ref{main}. (a)$\Longrightarrow$(c)]
Fix $r>0$, and let $\rho$ be the metric defined by
$$\rho_z(\x)=\left|W(z)\x\right|,\quad \x\in\C^d,\quad z\in\C^n.$$
It is sufficient to show that $\rho$ is an $\A_{p,r}$-metric. To this end, fix $u\in\C^n$ and $\x\in\C^d$. Define
$$f=\frac{1}{c_{\alpha,u,r}}\chi_{Q_r(u)}k^{\alpha}_u\x.$$
Then $P_{\alpha,u,r}f=\chi_{Q_r(u)}k^{\alpha}_u\x$ and $\x=\int_{Q_r(u)}f\overline{k^{\alpha}_u}d\lambda_{\alpha}$. Combining the boundedness of $P_{\alpha}$ with Proposition \ref{relation} yields that for $1\leq p<\infty$,
\begin{align}\label{low}
\nonumber e^{n\alpha r^2/2}\|P_{\alpha}\|\|f\|_{L^p_{\alpha,W}(\C^n;\C^d)}&\geq\left\|P_{\alpha,u,r}f\right\|_{L^p_{\alpha,W}(\C^n;\C^d)}\\
  &\nonumber=\left(\int_{Q_r(u)}\left|W(z)k^{\alpha}_u(z)\x\right|^pe^{-\frac{p\alpha}{2}|z|^2}dv(z)\right)^{1/p}\\
  &\nonumber=\left(\int_{Q_r(u)}\left|W(z)\x\right|^pe^{-\frac{p\alpha}{2}|z-u|^2}dv(z)\right)^{1/p}\\
  &\nonumber\geq e^{-\frac{n\alpha r^2}{4}}\left(\int_{Q_r(u)}\left|W(z)\x\right|^pdv(z)\right)^{1/p}\\
  &=e^{-\frac{n\alpha r^2}{4}}r^{2n/p}\rho_{p,Q_r(u)}(\x),
\end{align}
and for $p=\infty$,
\begin{align}\label{low1}
\nonumber e^{n\alpha r^2/2}\|P_{\alpha}\|\|f\|_{L^{\infty}_{\alpha,W}(\C^n;\C^d)}
&\geq\left\|P_{\alpha,u,r}f\right\|_{L^{\infty}_{\alpha,W}(\C^n;\C^d)}\\
&\nonumber=\mathrm{ess}\sup_{z\in Q_r(u)}\left|W(z)k^{\alpha}_u(z)\x\right|e^{-\frac{\alpha}{2}|z|^2}\\
&\nonumber=\mathrm{ess}\sup_{z\in Q_r(u)}\left|W(z)\x\right|e^{-\frac{\alpha}{2}|z-u|^2}\\
&\nonumber\geq e^{-\frac{n\alpha r^2}{4}}\mathrm{ess}\sup_{z\in Q_r(u)}\left|W(z)\x\right|\\
&=e^{-\frac{n\alpha r^2}{4}}\rho_{\infty,Q_r(u)}(\x).
\end{align}
Suppose now that $1<p\leq\infty$. Then we have
\begin{align*}
&\left(\rho^*_{p',Q_r(u)}\right)^*(\x)=\sup_{\y\in\C^d\setminus\{0\}}\frac{|\langle \x,\y\rangle|}{\rho^*_{p',Q_r(u)}(\y)}\\
&\qquad=\sup_{\y\in\C^d\setminus\{0\}}
  \frac{\left|\left\langle\int_{Q_r(u)}f(\zeta)\overline{k^{\alpha}_u(\zeta)}d\lambda_{\alpha}(\zeta),\y\right\rangle\right|}
  {\left(\frac{1}{v(Q_r(u))}\int_{Q_r(u)}\left|W^{-1}(z)\y\right|^{p'}dv(z)\right)^{1/p'}}\\
&\qquad\geq r^{2n/p'}e^{-\frac{n\alpha r^2}{4}}\sup_{\y\in\C^d\setminus\{0\}}
  \frac{\left|\left(\frac{\alpha}{\pi}\right)^n\int_{Q_r(u)}\left\langle f(\zeta)\overline{k^{\alpha}_u(\zeta)},\y\right\rangle e^{-\alpha|\zeta|^2}dv(\zeta)\right|}
  {\left(\int_{Q_r(u)}\left|W^{-1}(z)\y\right|^{p'}|k^{\alpha}_u(z)|^{p'}e^{-\frac{p'\alpha}{2}|z|^2}dv(z)\right)^{1/p'}}\\
&\qquad=\left(\frac{\alpha}{\pi}\right)^nr^{2n/p'}e^{-\frac{n\alpha r^2}{4}}\sup_{\y\in\C^d\setminus\{0\}}
  \frac{\left|\langle f,\frac{}{}\chi_{Q_r(u)}k^{\alpha}_u\y\rangle_{\alpha}\right|}
  {\|\chi_{Q_r(u)}k^{\alpha}_u\y\|_{L^{p'}_{\alpha,W^{-1}}(\C^n;\C^d)}},
\end{align*}
which, in conjunction with Lemma \ref{ness-key}, implies that
$$\left(\rho^*_{p',Q_r(u)}\right)^*(\x)
\geq\left(\frac{\alpha}{\pi}\right)^{2n}r^{2n+2n/p'}e^{-\frac{5n\alpha r^2}{4}}\|P_{\alpha}\|^{-1}\|f\|_{L^p_{\alpha,W}(\C^n;\C^d)}.$$
Combining the above inequality with \eqref{low} and \eqref{low1}, we deduce that
$$\left(\rho^*_{p',Q_r(u)}\right)^*(\x)\geq
\left(\frac{\alpha r^2}{\pi}\right)^{2n}e^{-2n\alpha r^2}\|P_{\alpha}\|^{-2}\rho_{p,Q_r(u)}(\x),$$
which is equivalent to
$$\rho^*_{p',Q_r(u)}(\x)\leq\left(\frac{\pi}{\alpha r^2}\right)^{2n}e^{2n\alpha r^2}\|P_{\alpha}\|^2\left(\rho_{p,Q_r(u)}\right)^*(\x).$$
Since $u\in\C^n$ and $\x\in\C^d$ are both arbitrary, we know that $\rho$ is an $\A_{p,r}$-metric. Moreover,
$$[\rho]_{\A_{p,r}}\leq\left(\frac{\pi}{\alpha r^2}\right)^{2n}e^{2n\alpha r^2}\|P_{\alpha}\|^2.$$
Suppose now that $p=1$. Then similarly,
\begin{align*}
	\left(\rho^*_{\infty,Q_r(u)}\right)^*(\x)&=\sup_{\y\in\C^d\setminus\{0\}}\frac{|\langle \x,\y\rangle|}{\rho^*_{\infty,Q_r(u)}(\y)}\\
	&=\sup_{\y\in\C^d\setminus\{0\}}
	\frac{\left|\left\langle\int_{Q_r(u)}f(\zeta)\overline{k^{\alpha}_u(\zeta)}d\lambda_{\alpha}(\zeta),\y\right\rangle\right|}
	{\mathrm{ess}\sup_{z\in Q_r(u)}\left|W^{-1}(z)\y\right|}\\
	&\geq\left(\frac{\alpha}{\pi}\right)^ne^{-\frac{n\alpha r^2}{4}}\sup_{\y\in\C^d\setminus\{0\}}
	\frac{\left|\langle f,\chi_{Q_r(u)}k^{\alpha}_u\y\rangle_{\alpha}\right|}
	{\|\chi_{Q_r(u)}k^{\alpha}_u\y\|_{L^{\infty}_{\alpha,W^{-1}}(\C^n;\C^d)}}\\
	&\geq\left(\frac{\alpha r}{\pi}\right)^{2n}e^{-\frac{5n\alpha r^2}{4}}\|P_{\alpha}\|^{-1}\|f\|_{L^1_{\alpha,W}(\C^n;\C^d)}\\
	&\geq\left(\frac{\alpha r^2}{\pi}\right)^{2n}e^{-2n\alpha r^2}\|P_{\alpha}\|^{-2}\rho_{1,Q_r(u)}(\x),
\end{align*}
which finishes the proof.
\end{proof}

We now turn to the implication (d)$\Longrightarrow$(b) of Theorem \ref{main}. Before proceeding, we establish some estimates for $\A_{p,r}$-metrics. For a cube $Q\subset\C^n$, we use $3Q$ to denote the cube with the same center but with side length $3l(Q)$.
\begin{lemma}\label{3Q}
Let $r>0$, $1\leq p\leq\infty$, and let $\rho$ be an $\A_{p,3r}$-metric. Then for any cube $Q\subset\C^n$ with $l(Q)=r$ and any $\x\in\C^d$,
$$\rho_{p,3Q}(\x)\leq 3^{2n/p'}[\rho]_{\A_{p,3r}}\rho_{p,Q}(\x).$$
\end{lemma}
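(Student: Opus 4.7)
The plan is to dualize the $\A_{p,3r}$-condition and then shrink the averaging cube from $3Q$ down to $Q$ in the dual norm, paying only a volume factor. In more detail, since $\rho$ is an $\A_{p,3r}$-metric and $l(3Q)=3r$, the defining inequality applied to the cube $3Q$ gives
\[
\rho^*_{p',3Q}(\y)\le \cac_{p,3r}(\rho)\,(\rho_{p,3Q})^*(\y)\qquad\text{for all }\y\in\C^d.
\]
Taking duals of these two norms on $\C^d$ reverses the inequality, and using $\C^d$-reflexivity $(\rho_{p,3Q})^{**}=\rho_{p,3Q}$, I obtain
\[
\rho_{p,3Q}(\x)\le \cac_{p,3r}(\rho)\,(\rho^*_{p',3Q})^*(\x)\qquad\text{for all }\x\in\C^d.
\]

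Next I bound $(\rho^*_{p',3Q})^*(\x)$ by restricting the defining integral of $\rho^*_{p',3Q}$ from $3Q$ to the smaller cube $Q\subset 3Q$. Since $v(3Q)=3^{2n}v(Q)$, for any $\y\in\C^d$,
\[
\rho^*_{p',3Q}(\y)=\left(\frac{1}{v(3Q)}\int_{3Q}(\rho^*_z(\y))^{p'}dv(z)\right)^{1/p'}
\ge 3^{-2n/p'}\rho^*_{p',Q}(\y),
\]
with the obvious modification (and no loss) when $p=1$, $p'=\infty$. Plugging this into the supremum defining $(\rho^*_{p',3Q})^*$ yields
\[
(\rho^*_{p',3Q})^*(\x)\le 3^{2n/p'}(\rho^*_{p',Q})^*(\x).
\]

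Finally, Lemma \ref{geq} (applied on the cube $Q$) says $\rho^*_{p',Q}(\y)\ge (\rho_{p,Q})^*(\y)$, which dualizes to $(\rho^*_{p',Q})^*(\x)\le \rho_{p,Q}(\x)$. Combining the three displayed inequalities gives
\[
\rho_{p,3Q}(\x)\le \cac_{p,3r}(\rho)\cdot 3^{2n/p'}\cdot\rho_{p,Q}(\x),
\]
and since $1/p'=1-1/p$ this is exactly the claimed bound. No step looks like a real obstacle; the only thing to be slightly careful about is handling $p=1$, $p'=\infty$ simultaneously (in which case the volume factor $3^{2n/p'}$ is $1$, consistent with the exponent $2n(1-1/p)=0$ in the statement), and making sure the duality $(\,\cdot\,)^{**}=\,\cdot\,$ is invoked on the finite-dimensional space $\C^d$ where it is automatic.
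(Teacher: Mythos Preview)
Your proof is correct and follows essentially the same approach as the paper: dualize the $\A_{p,3r}$-inequality on $3Q$, shrink the cube in $\rho^*_{p',\cdot}$ at the cost of the volume factor $3^{2n/p'}$, and then invoke Lemma~\ref{geq} on $Q$. The only difference is notational---the paper writes each dualization as an explicit supremum over $\y\in\C^d\setminus\{0\}$ rather than phrasing it as ``taking duals reverses the inequality.''
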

\begin{proof}
The $\A_{p,3r}$-condition implies that
$$\rho_{p,3Q}(\x)=\sup_{\y\in\C^d\setminus\{0\}}\frac{|\langle \x,\y\rangle|}{\left(\rho_{p,3Q}\right)^*(\y)}
\leq[\rho]_{\A_{p,3r}}\sup_{\y\in\C^d\setminus\{0\}}\frac{|\langle \x,\y\rangle|}{\rho^*_{p',3Q}(\y)}.$$
It is clear that $\rho^*_{p',3Q}(\y)\geq3^{-2n/p'}\rho^*_{p',Q}(\y)$. Hence by Lemma \ref{geq},
\begin{align*}
\rho_{p,3Q}(\x)&\leq3^{2n/p'}[\rho]_{\A_{p,3r}}\sup_{\y\in\C^d\setminus\{0\}}\frac{|\langle \x,\y\rangle|}{\rho^*_{p',Q}(\y)}\\
&\leq3^{2n/p'}[\rho]_{\A_{p,3r}}\sup_{\y\in\C^d\setminus\{0\}}\frac{|\langle \x,\y\rangle|}{\left(\rho_{p,Q}\right)^*(\y)}\\
&=3^{2n/p'}[\rho]_{\A_{p,3r}}\rho_{p,Q}(\x),
\end{align*}
which completes the proof.
\end{proof}

For each $r>0$, we will treat $r\mathbb{Z}^{2n}$ as a subset of $\C^n$ via the canonical identification between $\mathbb{R}^{2n}$ and $\C^n$.

\begin{lemma}\label{nunu'}
Let $r>0$, $1\leq p\leq\infty$, and let $\rho$ be an $\A_{p,3r}$-metric. Then for any $\nu,\nu'\in r\mathbb{Z}^{2n}$,
$$\rho_{p,Q_r(\nu)}(\x)\leq\left(3^{2n}[\rho]_{\A_{p,3r}}\right)^{\frac{\sqrt{2n}}{r}|\nu-\nu'|}\rho_{p,Q_r(\nu')}(\x),
\quad \x\in\C^d.$$
Moreover, the reducing operators satisfy
$$\left\|\R_{Q_r(\nu)}\R^{-1}_{Q_r(\nu')}\right\|_{\op}\leq\sqrt{d}\left(3^{2n}[\rho]_{\A_{p,3r}}\right)^{\frac{\sqrt{2n}}{r}|\nu-\nu'|},
\quad \nu,\nu'\in r\mathbb{Z}^{2n}.$$
\end{lemma}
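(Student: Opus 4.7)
The plan is to chain Lemma \ref{3Q} along a path in the lattice $r\mathbb{Z}^{2n}$ connecting $\nu'$ to $\nu$, propagating the single-cube $\A_{p,3r}$-estimate one lattice step at a time. The key geometric observation is that if $\mu,\mu'\in r\mathbb{Z}^{2n}$ differ by exactly $\pm r$ in a single real coordinate (of $\mathbb{R}^{2n}\cong\C^n$), then $|\mu-\mu'|_\infty = r$ and a triangle-inequality check gives $Q_r(\mu)\subset 3Q_r(\mu')$. Bounding the integral defining $\rho_{p,Q_r(\mu)}(\x)^p$ by the one over $3Q_r(\mu')$ and using $v(3Q_r(\mu'))/v(Q_r(\mu)) = 3^{2n}$ produces $\rho_{p,Q_r(\mu)}(\x)\leq 3^{2n/p}\rho_{p,3Q_r(\mu')}(\x)$. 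Feeding this into Lemma \ref{3Q} applied to $Q_r(\mu')$ yields the one-step estimate
$$\rho_{p,Q_r(\mu)}(\x) \leq 3^{2n}\,\cac_{p,3r}(\rho)\,\rho_{p,Q_r(\mu')}(\x).$$

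For arbitrary $\nu,\nu'\in r\mathbb{Z}^{2n}$, write $\nu - \nu' = rm$ with $m\in\mathbb{Z}^{2n}$ and build a ``rook path'' $\nu' = \mu_0,\mu_1,\ldots,\mu_N = \nu$ in $r\mathbb{Z}^{2n}$ by changing one coordinate at a time, using exactly $N = \|m\|_1$ single-step moves. Iterating the one-step estimate $N$ times and then applying Cauchy--Schwarz in the form $\|m\|_1 \leq \sqrt{2n}\,\|m\|_2 = (\sqrt{2n}/r)|\nu-\nu'|$ gives
$$\rho_{p,Q_r(\nu)}(\x) \leq \bigl(3^{2n}\,\cac_{p,3r}(\rho)\bigr)^{(\sqrt{2n}/r)|\nu-\nu'|}\rho_{p,Q_r(\nu')}(\x),$$
which is the first claim.

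For the operator-norm statement, \eqref{reduce} gives $\rho_{p,Q_r(\nu')}(\x) \leq |\R_{Q_r(\nu')}\x|$ and $|\R_{Q_r(\nu)}\x| \leq \sqrt{d}\,\rho_{p,Q_r(\nu)}(\x)$. Substituting $\x = \R^{-1}_{Q_r(\nu')}\y$ (legitimate since $\R_{Q_r(\nu')}$ is positive and invertible) into the chain inequality and taking the supremum over unit $\y$ produces the desired bound on $\|\R_{Q_r(\nu)}\R^{-1}_{Q_r(\nu')}\|$. I expect no substantial obstacle: this is essentially a discrete Harnack-type propagation. The two subtle bookkeeping points are that the factor $3$ is precisely what makes both $Q_r(\mu)\subset 3Q_r(\mu')$ and the hypothesis of Lemma \ref{3Q} work simultaneously, and that the $\sqrt{2n}$ in the exponent is the price paid for converting the $\ell^1$ step count along the rook path into the Euclidean distance $|\nu-\nu'|$.
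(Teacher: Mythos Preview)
Your proposal is correct and follows essentially the same approach as the paper: the paper also chains the one-step estimate $\rho_{p,Q_r(a_{j-1})}(\x)\leq 3^{2n}\cac_{p,3r}(\rho)\,\rho_{p,Q_r(a_j)}(\x)$ along a lattice path of length at most $\sqrt{2n}\,|\nu-\nu'|/r$ (citing \cite{Is11} for the path, which is exactly your rook path), and then deduces the reducing-operator bound from \eqref{reduce}. Your explicit description of the path and the Cauchy--Schwarz passage from $\|m\|_1$ to $\|m\|_2$ makes the argument self-contained rather than relying on the external reference.
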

\begin{proof}
Let $\Gamma(\nu,\nu')=(a_0,a_1,\cdots,a_k)$ be the discrete path in $r\mathbb{Z}^{2n}$ from $\nu$ to $\nu'$ defined in \cite{Is11}. Then $a_0=\nu$, $a_k=\nu'$, $k\leq\sqrt{2n}|\nu-\nu'|/r$, and $Q_r(a_{j-1})\subset Q_{3r}(a_j)$ for each $j\in\{1,\cdots,k\}$. Consequently, for $\x\in\C^d$, Lemma \ref{3Q} yields that
$$\frac{\rho_{p,Q_r(\nu)}(\x)}{\rho_{p,Q_r(\nu')}(\x)}=\prod_{j=1}^k\frac{\rho_{p,Q_r(a_{j-1})}(\x)}{\rho_{p,Q_r(a_j)}(\x)}
\leq\prod_{j=1}^k\frac{3^{2n/p}\rho_{p,Q_{3r}(a_{j})}(\x)}{\rho_{p,Q_r(a_j)}(\x)}
\leq\left(3^{2n}[\rho]_{\A_{p,3r}}\right)^{\frac{\sqrt{2n}}{r}|\nu-\nu'|}.$$
The second assertion is a direct consequence of the first one and \eqref{reduce}.
\end{proof}

We now give the proof of the implication (d)$\Longrightarrow$(b) of Theorem \ref{main}.
\begin{proof}[Proof of Theorem \ref{main}. (d)$\Longrightarrow$(b)]
Suppose $W\in\A_{p,r_0}$ and write $r=r_0/3$. Then an elementary computation shows that $W\in\A_{p,r}$, and $[W]_{\A_{p,r}}\leq3^{2n}[W]_{\A_{p,r_0}}$; see the proof of Proposition \ref{direct}. Let $\rho$ be the metric defined by
$$\rho_z(\x)=\left|W(z)\x\right|,\quad \x\in\C^d,\quad z\in\C^n.$$
Then $\rho$ is an $\A_{p,3r}$-metric. The proof will be accomplished by a duality argument. To this end, choose $f\in L^p_{\alpha}(\C^n;\C^d)$ and $g\in L^{p'}_{\alpha}(\C^n;\C)$ with
$$\|f\|_{L^p_{\alpha}(\C^n;\C^d)}=\|g\|_{L^{p'}_{\alpha}(\C^n;\C)}=1.$$
Then
\begin{align}\label{pfg}
&\left|\left\langle P^+_{\alpha,W}(f),g\right\rangle_{\alpha}\right|\nonumber\\
=&\left|\int_{\C^n}\int_{\C^n}\left|W(z)W^{-1}(u)f(u)\right|
  \left|K^{\alpha}_z(u)\right|d\lambda_{\alpha}(u)
  \overline{g(z)}e^{-\alpha|z|^2}dv(z)\right|\nonumber\\
\lesssim&\int_{\C^n}\int_{\C^n}
  \left|W(z)W^{-1}(u)f(u)\right||g(z)|
  e^{-\frac{\alpha}{2}|u|^2-\frac{\alpha}{2}|z|^2}e^{-\frac{\alpha}{2}|z-u|^2}dv(u)dv(z)\nonumber\\
\lesssim&\sum_{\nu,\nu'\in r\mathbb{Z}^{2n}}e^{-\frac{\alpha}{4}|\nu-\nu'|^2}\times\nonumber\\
&\qquad \int_{Q_r(\nu)}\int_{Q_r(\nu')}\left|W(z)W^{-1}(u)f(u)\right|
  |g(z)|e^{-\frac{\alpha}{2}|u|^2-\frac{\alpha}{2}|z|^2}dv(u)dv(z)\nonumber\\
\leq&\sum_{\nu,\nu'\in r\mathbb{Z}^{2n}}e^{-\frac{\alpha}{4}|\nu-\nu'|^2}
  \int_{Q_r(\nu')}\left|\R_{Q_r(\nu')}W^{-1}(u)f(u)\right|e^{-\frac{\alpha}{2}|u|^2}dv(u)\times
  \nonumber\\
&\qquad  \int_{Q_r(\nu)}\left\|W(z)\R^{-1}_{Q_r(\nu')}\right\|_{\op}
  |g(z)|e^{-\frac{\alpha}{2}|z|^2}dv(z),
\end{align}
where $\R_{Q_r(\nu')}$ is the reducing operator of $\rho_{p,Q_r(\nu')}$ as in \eqref{reduce}. We now separate into three cases: $1<p<\infty$, $p=1$ and $p=\infty$.

{\bf Case 1: $1<p<\infty$.} Using H\"{o}lder's inequality in \eqref{pfg}, we obtain that
\begin{align}\label{case1}
&\nonumber\left|\left\langle P^+_{\alpha,W}(f),g\right\rangle_{\alpha}\right|\\
\nonumber\lesssim&\left(\sum_{\nu,\nu'\in r\mathbb{Z}^{2n}}e^{-\frac{\alpha}{4}|\nu-\nu'|^2}
    \left(\int_{Q_r(\nu')}\left\|\R_{Q_r(\nu')}W^{-1}(u)\right\|_{\op}|f(u)|e^{-\frac{\alpha}{2}|u|^2}dv(u)\right)^p\right)^{1/p}\\
&\nonumber\quad\times\left(\sum_{\nu,\nu'\in r\mathbb{Z}^{2n}}e^{-\frac{\alpha}{4}|\nu-\nu'|^2}
\left(\int_{Q_r(\nu)}\left\|W(z)\R^{-1}_{Q_r(\nu')}\right\|_{\op}|g(z)|e^{-\frac{\alpha}{2}|z|^2}dv(z)\right)^{p'}\right)^{1/p'}\\
=&:\mathcal{S}_1(f)^{1/p}\cdot\mathcal{S}_2(g)^{1/p'}.
\end{align}
For the term $\mathcal{S}_1(f)$, we have
\begin{align}\label{S1ff}
\nonumber\mathcal{S}_1(f)&=\sum_{\nu,\nu'\in r\mathbb{Z}^{2n}}e^{-\frac{\alpha}{4}|\nu-\nu'|^2}
  \left(\int_{Q_r(\nu')}\left\|\R_{Q_r(\nu')}W^{-1}(u)\right\|_{\op}|f(u)|e^{-\frac{\alpha}{2}|u|^2}dv(u)\right)^p\\
&\lesssim\sum_{\nu'\in r\mathbb{Z}^{2n}}\|\chi_{Q_r(\nu')}f\|^p_{L^p_{\alpha}(\C^n;\C^d)}
  \left(\int_{Q_r(\nu')}\left\|\R_{Q_r(\nu')}W^{-1}(u)\right\|_{\op}^{p'}dv(u)\right)^{p/p'}.
\end{align}
Let $\{\mathbf{e}_j\}_{1\leq j\leq d}$ be the standard orthonormal basis of $\C^d$. Then it is easy to see that for any $d\times d$ matrix $M$, the operator norm of $M$ satisfies
$$\|M\|_{\op}\leq d^{1/2}\max_{1\leq j\leq d}|M\mathbf{e}_j|.$$
Therefore, for $\nu'\in r\mathbb{Z}^{2n}$, we can establish that
\begin{align}\label{matrix-norm}
&\nonumber\int_{Q_r(\nu')}\left\|\R_{Q_r(\nu')}W^{-1}(u)\right\|_{\op}^{p'}dv(u)\\
\nonumber=&\int_{Q_r(\nu')}\left\|W^{-1}(u)\R_{Q_r(\nu')}\right\|_{\op}^{p'}dv(u)\\
\nonumber\leq&\int_{Q_r(\nu')}\left(d^{1/2}\max_{1\leq j\leq d}\left|W^{-1}(u)\R_{Q_r(\nu')}\mathbf{e}_j\right|\right)^{p'}dv(u)\\
\nonumber\leq&\, d^{p'/2}\sum_{j=1}^d\int_{Q_r(\nu')}\left|W^{-1}(u)\R_{Q_r(\nu')}\mathbf{e}_j\right|^{p'}dv(u)\\
\asymp&\, d^{p'/2}\sum_{j=1}^d\Big(\rho^*_{p',Q_r(\nu')}\left(\R_{Q_r(\nu')}\mathbf{e}_j\right)\Big)^{p'},
\end{align}
which, in conjunction with \eqref{reduce-s} and \eqref{R-Apr}, implies that
\begin{align*}
\int_{Q_r(\nu')}\left\|\R_{Q_r(\nu')}W^{-1}(u)\right\|_{\op}^{p'}dv(u)
&\lesssim d^{p'/2}\sum_{j=1}^d\left|\R^{\star}_{Q_r(\nu')}\R_{Q_r(\nu')}\mathbf{e}_j\right|^{p'}\\
&\leq d^{\frac{p'}{2}+1}\left\|\R^{\star}_{Q_r(\nu')}\R_{Q_r(\nu')}\right\|_{\op}^{p'}\\
&\leq d^{\frac{3p'}{2}+1}[W]_{\A_{p,r}}^{p'}\\
&\lesssim d^{\frac{3p'}{2}+1}[W]_{\A_{p,r_0}}^{p'}.
\end{align*}
Combining this with \eqref{S1ff}, we establish that
\begin{equation}\label{S1f}
\mathcal{S}_1(f)\lesssim d^{\frac{3p}{2}+\frac{p}{p'}}[W]_{\A_{p,r_0}}^p.
\end{equation}
We now turn to the term $\mathcal{S}_2(g)$. H\"{o}lder's inequality yields that
\begin{align}\label{S2gg}
\nonumber\mathcal{S}_2(g)&=\sum_{\nu,\nu'\in r\mathbb{Z}^{2n}}e^{-\frac{\alpha}{4}|\nu-\nu'|^2}
  \left(\int_{Q_r(\nu)}\left\|W(z)\R^{-1}_{Q_r(\nu')}\right\|_{\op}|g(z)|e^{-\frac{\alpha}{2}|z|^2}dv(z)\right)^{p'}\\
&\leq\sum_{\nu,\nu'\in r\mathbb{Z}^{2n}}e^{-\frac{\alpha}{4}|\nu-\nu'|^2}\|\chi_{Q_r(\nu)}g\|^{p'}_{L^{p'}_{\alpha}(\C^n;\C)}
  \left(\int_{Q_r(\nu)}\left\|W(z)\R^{-1}_{Q_r(\nu')}\right\|_{\op}^pdv(z)\right)^{\frac{p'}{p}}.
\end{align}
For $\nu,\nu'\in r\mathbb{Z}^{2n}$, using the same method as in \eqref{matrix-norm}, we have
$$\int_{Q_r(\nu)}\left\|W(z)\R^{-1}_{Q_r(\nu')}\right\|_{\op}^pdv(z)
\lesssim d^{p/2}\sum_{j=1}^d\bigg(\rho_{p,Q_r(\nu)}\left(\R^{-1}_{Q_r(\nu')}\mathbf{e}_j\right)\bigg)^p,$$
which, together with \eqref{reduce} and Lemma \ref{nunu'}, implies that
\begin{align*}
\int_{Q_r(\nu)}\left\|W(z)\R^{-1}_{Q_r(\nu')}\right\|_{\op}^pdv(z)
&\lesssim d^{\frac{p}{2}+1}\left\|\R_{Q_r(\nu)}\R^{-1}_{Q_r(\nu')}\right\|_{\op}^p\\
&\leq d^{p+1}\left(3^{2n}[W]_{\A_{p,3r}}\right)^{\frac{p\sqrt{2n}}{r}|\nu-\nu'|}.
\end{align*}
Note that for any $c>1$,
\begin{align*}
\sum_{\nu\in r\mathbb{Z}^{2n}}e^{-\frac{\alpha}{4}|\nu|^2}c^{|\nu|}
&\asymp\sum_{\nu\in r\mathbb{Z}^{2n}}\int_{Q_r(\nu)}e^{-\frac{\alpha}{4}|\nu|^2}c^{|\nu|}dv(z)\\
&\lesssim c^{\frac{\sqrt{2n}}{2}r}\int_{\C^n}e^{-\frac{\alpha}{8}|z|^2}c^{|z|}dv(z)\\
&\asymp c^{\frac{\sqrt{2n}}{2}r}\int_0^{+\infty}t^{2n-1}e^{-\frac{\alpha}{8}t^2+t\log c}dt\\
&=c^{\frac{\sqrt{2n}}{2}r+\frac{2}{\alpha}\log c}\int_{-\frac{4}{\alpha}\log c}^{+\infty}
	\left(t+\frac{4}{\alpha}\log c\right)^{2n-1}e^{-\frac{\alpha}{8}t^2}dt\\
&\lesssim c^{\frac{\sqrt{2n}}{2}r+\frac{2}{\alpha}\log c}\left(1+\log^{2n-1} c\right),
\end{align*}
where the implicit constants depend only on $\alpha$, $r$ and $n$. Consequently, for any $\nu\in r\mathbb{Z}^{2n}$,
\begin{align*}
&\sum_{\nu'\in r\mathbb{Z}^{2n}}e^{-\frac{\alpha}{4}|\nu-\nu'|^2}
  \left(\int_{Q_r(\nu)}\left\|W(z)\R^{-1}_{Q_r(\nu')}\right\|_{\op}^pdv(z)\right)^{p'/p}\\
\lesssim&\, d^{p'+\frac{p'}{p}}\sum_{\nu'\in r\mathbb{Z}^{2n}}e^{-\frac{\alpha}{4}|\nu-\nu'|^2}
  \left(3^{2n}[W]_{\A_{p,3r}}\right)^{\frac{p'\sqrt{2n}}{r}|\nu-\nu'|}\\
\lesssim&\, d^{p'+\frac{p'}{p}}[W]_{\A_{p,r_0}}^{p'n+\frac{144p'^2n^2}{\alpha r_0^2}\log3+\frac{36p'^2n}{\alpha r_0^2}\log[W]_{\A_{p,r_0}}}
  \left(1+\log^{2n-1}[W]_{\A_{p,r_0}}\right),
\end{align*}
which, together with \eqref{S2gg}, yields that
\begin{align*}%\label{S2g}
\mathcal{S}_2(g)\lesssim
 d^{p'+\frac{p'}{p}}[W]_{\A_{p,r_0}}^{p'n+\frac{144p'^2n^2}{\alpha r_0^2}\log3+\frac{36p'^2n}{\alpha r_0^2}\log[W]_{\A_{p,r_0}}}
\left(1+\log^{2n-1}[W]_{\A_{p,r_0}}\right).
\end{align*}
Inserting this and \eqref{S1f} into \eqref{case1}, we conclude that $P^+_{\alpha,W}:L^p_{\alpha}(\C^n;\C^d)\to L^p_{\alpha}(\C^n;\C)$ is bounded, and
$$\left\|P^+_{\alpha,W}\right\|\lesssim
d^{7/2}[W]_{\A_{p,r_0}}^{1+n+\frac{144p'n^2}{\alpha r_0^2}\log3+\frac{36p'n}{\alpha r_0^2}\log[W]_{\A_{p,r_0}}}
  \left(1+\log^{2n-1}[W]_{\A_{p,r_0}}\right)^{1/p'},$$
where the implicit constant depends only on $\alpha,p,r_0$ and $n$.

{\bf Case 2: $p=1$.} By \eqref{pfg}, we now have
\begin{align*}
&\left|\left\langle P^+_{\alpha,W}(f),g\right\rangle_{\alpha}\right|\\
\lesssim&\sum_{\nu,\nu'\in r\mathbb{Z}^{2n}}e^{-\frac{\alpha}{4}|\nu-\nu'|^2}
  \int_{Q_r(\nu')}\left|\R_{Q_r(\nu')}W^{-1}(u)f(u)\right|e^{-\frac{\alpha}{2}|u|^2}dv(u)\times\\
&\qquad\qquad\int_{Q_r(\nu)}\left\|W(z)\R^{-1}_{Q_r(\nu')}\right\|_{\op}
  |g(z)|e^{-\frac{\alpha}{2}|z|^2}dv(z)\\
\leq&\left(\sum_{\nu,\nu'\in r\mathbb{Z}^{2n}}e^{-\frac{\alpha}{8}|\nu-\nu'|^2}
  \int_{Q_r(\nu')}\left\|\R_{Q_r(\nu')}W^{-1}(u)\right\|_{\op}|f(u)|
  e^{-\frac{\alpha}{2}|u|^2}dv(u)\right)\times\\
&\qquad\qquad\left(\sup_{\nu,\nu'\in r\mathbb{Z}^{2n}}e^{-\frac{\alpha}{8}|\nu-\nu'|^2}
  \int_{Q_r(\nu)}\left\|W(z)\R^{-1}_{Q_r(\nu')}\right\|_{\op}
  |g(z)|e^{-\frac{\alpha}{2}|z|^2}dv(z)\right)\\
=&:\mathcal{S}_3(f)\cdot\mathcal{S}_4(g).
\end{align*}
For the term $\mathcal{S}_3(f)$, similarly as before,
\begin{align*}
\mathcal{S}_3(f)&\lesssim\sum_{\nu'\in r\mathbb{Z}^{2n}}
    \int_{Q_r(\nu')}\left\|\R_{Q_r(\nu')}W^{-1}(u)\right\|_{\op}|f(u)|e^{-\frac{\alpha}{2}|u|^2}dv(u)\\
&\leq\sum_{\nu'\in r\mathbb{Z}^{2n}}\mathrm{ess}\sup_{u\in Q_r(\nu')}\left\|W^{-1}(u)\R_{Q_r(\nu')}\right\|_{\op}
    \int_{Q_r(\nu')}|f(u)|e^{-\frac{\alpha}{2}|u|^2}dv(u)\\
&\leq d^{1/2}\sum_{\nu'\in r\mathbb{Z}^{2n}}\sum_{j=1}^d\rho^*_{\infty,Q_r(\nu')}\left(\R_{Q_r(\nu')}\mathbf{e}_j\right)
    \int_{Q_r(\nu')}|f(u)|e^{-\frac{\alpha}{2}|u|^2}dv(u)\\
&\leq d^{3/2}\sum_{\nu'\in r\mathbb{Z}^{2n}}\left\|\R^{\star}_{Q_r(\nu')}\R_{Q_r(\nu')}\right\|_{\op}
    \int_{Q_r(\nu')}|f(u)|e^{-\frac{\alpha}{2}|u|^2}dv(u)\\
&\leq d^{5/2}[W]_{\A_{1,r}}\\
&\lesssim d^{5/2}[W]_{\A_{1,r_0}}.
\end{align*}
For the term $\mathcal{S}_4(g)$, we have that
\begin{align*}
\mathcal{S}_4(g)\leq\sup_{\nu,\nu'\in r\mathbb{Z}^{2n}}e^{-\frac{\alpha}{8}|\nu-\nu'|^2}
    \int_{Q_r(\nu)}\left\|W(z)\R^{-1}_{Q_r(\nu')}\right\|_{\op}dv(z).
\end{align*}
Noting that for $c>0$, $\sup_{t\in\mathbb{R}}e^{-\frac{\alpha}{8}t^2}c^t=c^{\frac{2}{\alpha}\log c}$, we may apply \eqref{reduce}, Lemma \ref{nunu'} and the same method as in \eqref{matrix-norm} to obtain that
\begin{align*}
\mathcal{S}_4(g)
&\lesssim\, d^{1/2}\sup_{\nu,\nu'\in r\mathbb{Z}^{2n}}e^{-\frac{\alpha}{8}|\nu-\nu'|^2}
  \sum_{j=1}^d\rho_{1,Q_r(\nu)}\left(\R^{-1}_{Q_r(\nu')}\mathbf{e}_j\right)\\
&\leq d^{3/2}\sup_{\nu,\nu'\in r\mathbb{Z}^{2n}}e^{-\frac{\alpha}{8}|\nu-\nu'|^2}\left\|\R_{Q_r(\nu)}\R^{-1}_{Q_r(\nu')}\right\|_{\op}\\
&\leq d^2\sup_{\nu,\nu'\in r\mathbb{Z}^{2n}}e^{-\frac{\alpha}{8}|\nu-\nu'|^2}
  \left(3^{2n}[W]_{\A_{1,3r}}\right)^{\frac{\sqrt{2n}}{r}|\nu-\nu'|}\\
&\lesssim d^2[W]_{\A_{1,r_0}}^{\frac{144n^2}{\alpha r_0^2}\log3+\frac{36n}{\alpha r_0^2}\log[W]_{\A_{1,r_0}}}.
\end{align*}
Therefore, $P^+_{\alpha,W}:L^1_{\alpha}(\C^n;\C^d)\to L^1_{\alpha}(\C^n;\C)$ is bounded, and
$$\left\|P^+_{\alpha,W}\right\|\lesssim
d^{9/2}[W]_{\A_{1,r_0}}^{1+\frac{144n^2}{\alpha r_0^2}\log3+\frac{36n}{\alpha r_0^2}\log[W]_{\A_{1,r_0}}}$$
with implicit constant depends only on $\alpha,r_0$ and $n$.

{\bf Case 3: $p=\infty$.} We deduce from \eqref{pfg} that
\begin{align*}
&\left|\left\langle P^+_{\alpha,W}(f),g\right\rangle_{\alpha}\right|\\
	\lesssim&\left(\sup_{\nu,\nu'\in r\mathbb{Z}^{2n}}e^{-\frac{\alpha}{8}|\nu-\nu'|^2}
	\int_{Q_r(\nu')}\left\|\R_{Q_r(\nu')}W^{-1}(u)\right\|_{\op}|f(u)|
	e^{-\frac{\alpha}{2}|u|^2}dv(u)\right)\times\\
	&\qquad\qquad\left(\sum_{\nu,\nu'\in r\mathbb{Z}^{2n}}e^{-\frac{\alpha}{8}|\nu-\nu'|^2}
	\int_{Q_r(\nu)}\left\|W(z)\R^{-1}_{Q_r(\nu')}\right\|_{\op}
	|g(z)|e^{-\frac{\alpha}{2}|z|^2}dv(z)\right)\\
	=&:\mathcal{S}_5(f)\cdot\mathcal{S}_6(g).
\end{align*}
Arguing as before, we can establish that
$$\mathcal{S}_5(f)\lesssim d^{5/2}[W]_{\A_{\infty,r_0}}$$
and
$$\mathcal{S}_6(g)\lesssim d^2[W]_{\A_{\infty,r_0}}^{n+\frac{288n^2}{\alpha r_0^2}\log3+\frac{72n}{\alpha r_0^2}\log[W]_{\A_{\infty,r_0}}}
\left(1+\log^{2n-1}[W]_{\A_{\infty,r_0}}\right).$$
Therefore, $P^+_{\alpha,W}:L^{\infty}_{\alpha}(\C^n;\C^d)\to L^{\infty}_{\alpha}(\C^n;\C)$ is bounded, and
$$\left\|P^+_{\alpha,W}\right\|\lesssim
d^{9/2}[W]_{\A_{\infty,r_0}}^{1+n+\frac{288n^2}{\alpha r_0^2}\log3+\frac{72n}{\alpha r_0^2}\log[W]_{\A_{\infty,r_0}}}
\left(1+\log^{2n-1}[W]_{\A_{\infty,r_0}}\right)$$
with implicit constant depends only on $\alpha,r_0$ and $n$. The proof is complete.
\end{proof}

As stated before, it follows from Theorem \ref{main} that for each $1\leq p\leq\infty$, all the classes of $\A_{p,r}$-weights coincide for $r>0$. We end this paper by a quantitative description of this fact, which is independent of Theorem \ref{main} and has its own interest.

\begin{proposition}\label{direct}
Let $1\leq p\leq\infty$, $0<r_1<r_2<\infty$, and let $\rho:z\mapsto \rho_z$ be a metric on $\C^n$. Then $\rho$ is an $\A_{p,r_1}$-metric if and only if it is an $\A_{p,r_2}$-metric. Moreover,
$$\left(\frac{r_1}{r_2}\right)^{2n}[\rho]_{\A_{p,r_1}}\leq[\rho]_{\A_{p,r_2}}
\leq3^{4n^2\left(1+\frac{3r_2}{r_1}\right)}d^{\frac{5}{2}}
\left(2\sqrt{\frac{r_1}{r_2}}+3\sqrt{\frac{r_2}{r_1}}\right)^{4n}
[\rho]_{\A_{p,r_1}}^{1+2n\left(1+\frac{3r_2}{r_1}\right)}.$$
In particular, for each $1\leq p\leq\infty$, all the classes of $\A_{p,r}$-metrics coincide for $r>0$.
\end{proposition}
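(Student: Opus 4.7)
The proposition asserts both inequalities in the display. The lower inequality $(r_1/r_2)^{2n}\cac_{p,r_1}(\rho)\leq\cac_{p,r_2}(\rho)$ expresses the easy direction $\A_{p,r_2} \Rightarrow \A_{p,r_1}$ and follows from a monotonicity argument: for any cube $Q$ of side $r_1$ with center $z_0$, the concentric enlargement $\tilde Q := Q_{r_2}(z_0)$ contains $Q$, so restricting the defining integrals from $\tilde Q$ to $Q$ yields
\[
\rho_{p,\tilde Q}(\x) \geq (r_1/r_2)^{2n/p}\rho_{p,Q}(\x),\qquad \rho^*_{p',\tilde Q}(\x) \geq (r_1/r_2)^{2n/p'}\rho^*_{p',Q}(\x).
\]
Dualizing the first and invoking the $\A_{p,r_2}$-hypothesis on $\tilde Q$ produces the claim with loss $(r_2/r_1)^{2n/p}\cdot(r_2/r_1)^{2n/p'} = (r_2/r_1)^{2n}$.

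The harder upper inequality is the direction $\A_{p,r_1} \Rightarrow \A_{p,r_2}$. Given a cube $Q$ of side $r_2$, my plan is to enlarge $Q$ to a cube $Q^*$ of side $M(r_1/3)$, where $M := \lceil 3r_2/r_1\rceil$, and to tile $Q^*$ disjointly into $M^{2n}$ subcubes $\{Q_\nu\}_{\nu\in\Lambda}$ of side $r_1/3$ whose centers lie in a suitable translate of $(r_1/3)\mathbb{Z}^{2n}$. The choice $r_1/3$ is crucial: Lemma \ref{nunu'} applied with $r = r_1/3$ requires only the $\A_{p,r_1}$-hypothesis we have. On each small cube $Q_\nu$, the $\A_{p,r_1}$-condition gives $\rho^*_{p',Q_\nu}(\x) \leq \cac_{p,r_1}(\rho)\,(\rho_{p,Q_\nu})^*(\x)$, while Lemma \ref{nunu'} provides, for any reference cube $Q_{\nu_0}$,
\[
\|\R_{Q_\nu}\R^{-1}_{Q_{\nu_0}}\| \leq \sqrt{d}\bigl(3^{2n}\cac_{p,r_1}(\rho)\bigr)^{2n(1+3r_2/r_1)},
\]
since $|\nu-\nu_0|\leq\sqrt{2n}\,l(Q^*)\leq\sqrt{2n}(r_2+r_1/3)$. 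Decomposing $(\rho_{p,Q^*})^p$ and $(\rho^*_{p',Q^*})^{p'}$ as arithmetic means of the corresponding quantities over the tile cubes and collapsing all the reducing operators to $\R_{Q_{\nu_0}}$ via \eqref{reduce}--\eqref{reduce-s}, one obtains an estimate $\rho^*_{p',Q^*}(\x)\leq C\,\cac_{p,r_1}(\rho)^{1+2n(1+3r_2/r_1)}(\rho_{p,Q^*})^*(\x)$. A final volume comparison between $Q$ and $Q^*$, used once at exponent $1/p$ and once at $1/p'$, transfers the estimate back to $Q$ and is the source of the mixed factor $(2\sqrt{r_1/r_2}+3\sqrt{r_2/r_1})^{4n}$.

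The main obstacle is careful bookkeeping rather than a conceptual leap. Specifically, one has to track (i) the repeated switches between $\rho_{p,\cdot}$ and $\rho^*_{p',\cdot}$ via duality (exploiting $\cac_{p,r}(\rho)=\cac_{p',r}(\rho^*)$ so that a chain bound on $\R^\star_{Q_\nu}$ also follows from Lemma \ref{nunu'}); (ii) the comparisons between the norms on $Q$, $Q^*$, and the $Q_\nu$'s, where only one direction of inequality comes for free from domain inclusion; and (iii) the $\sqrt{d}$ losses accrued at each invocation of \eqref{reduce}, \eqref{reduce-s}, Lemma \ref{nunu'}, and the standard inequality $\|M\|\leq\sqrt{d}\max_j|M\mathbf{e}_j|$ used to pass between operator and vector norms. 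These $\sqrt{d}$ losses accumulate into the $d^{5/2}$ factor, and combining them with the chain exponent $2n(1+3r_2/r_1)$ and the per-step $3^{2n}$ loss in Lemma \ref{nunu'} yields the announced constant $3^{4n^2(1+3r_2/r_1)}d^{5/2}(2\sqrt{r_1/r_2}+3\sqrt{r_2/r_1})^{4n}\cac_{p,r_1}(\rho)^{1+2n(1+3r_2/r_1)}$ after taking the supremum over $Q$.
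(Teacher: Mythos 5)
Your proposal follows the same strategy as the paper's proof: the lower bound is obtained exactly as you describe, by monotonicity of the averages under $Q_{r_1}(z)\subset Q_{r_2}(z)$ followed by one dualization, and the upper bound by decomposing the large cube into cubes of side $r_1/3$ (so that Lemma \ref{nunu'} applies under the $\A_{p,r_1}$-hypothesis alone) and chaining reducing operators; the use of an exactly tiled enlargement $Q^*$ instead of the paper's covering of $Q_{r_2}(z)$ itself is cosmetic.

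One bookkeeping step, however, does not work as described, and it is exactly where the stated exponent is at stake. If you collapse \emph{both} families of reducing operators onto a single fixed $\R_{Q_{\nu_0}}$ --- that is, bound $\rho^*_{p',Q^*}(\x)$ by moving each $\R^{-1}_{Q_{\nu'}}$ to $\R^{-1}_{Q_{\nu_0}}$, and separately bound $\rho_{p,Q^*}(\y)$ by moving each $\R_{Q_{\nu}}$ to $\R_{Q_{\nu_0}}$ in order to lower-bound $(\rho_{p,Q^*})^*(\x)$ --- then the chain factor $\bigl(3^{2n}\cac_{p,r_1}(\rho)\bigr)^{\frac{3\sqrt{2n}}{r_1}|\nu-\nu_0|}$ is paid twice, once on each side, and you end with exponent $1+4n\bigl(1+\tfrac{3r_2}{r_1}\bigr)$ instead of the claimed $1+2n\bigl(1+\tfrac{3r_2}{r_1}\bigr)$. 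The paper avoids this by estimating the product $\|\R_{Q_{r_2}(z)}\R^{\star}_{Q_{r_2}(z)}\|$ directly: expanding both factors over the small cubes yields a double sum of terms $\|\R^{\star}_{Q_{r_1/3}(\nu')}\R_{Q_{r_1/3}(\nu)}\|$, each of which costs one application of the $\A_{p,r_1/3}$-condition plus a \emph{single} chain from $\nu$ to $\nu'$; equivalently, in your formulation, take $\nu_0$ to be the small cube maximizing $\rho^*_{p',Q_{\nu'}}(\x)$, pay no chain on that side, and chain only in the lower bound for $(\rho_{p,Q^*})^*$. Relatedly, the factor $\bigl(2\sqrt{r_1/r_2}+3\sqrt{r_2/r_1}\bigr)^{4n}$ cannot come from the $Q$-versus-$Q^*$ volume comparison, which costs only $\bigl(1+\tfrac{r_1}{3r_2}\bigr)^{2n}\leq 2^{2n}$; in the paper it is $(\sharp\Lambda)^2\bigl(\tfrac{r_1}{3r_2}\bigr)^{2n}$ arising from the double sum over pairs of small cubes, and it grows like $(r_2/r_1)^{2n}$. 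With these two corrections your argument reproduces the paper's bound; as written it still establishes the qualitative coincidence of the $\A_{p,r}$-classes, but not the displayed inequality.
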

\begin{proof}
Suppose first that $\rho$ is an $\A_{p,r_2}$-metric. Then for any $z\in\C^n$ and $\x\in\C^d$, noting that
\begin{align*}
\left(\rho_{p,Q_{r_2}(z)}\right)^*(\x)
&=\sup_{\y\in\C^d\setminus\{0\}}\frac{|\langle\x,\y\rangle|}{\rho_{p,Q_{r_2}(z)}(\y)}\\
&\leq\left(\frac{r_2}{r_1}\right)^{2n/p}\sup_{\y\in\C^d\setminus\{0\}}
	\frac{|\langle\x,\y\rangle|}{\rho_{p,Q_{r_1}(z)}(\y)}\\
&=\left(\frac{r_2}{r_1}\right)^{2n/p}\left(\rho_{p,Q_{r_1}(z)}\right)^*(\x),
\end{align*}
we establish that
\begin{align*}
\rho^*_{p',Q_{r_1}(z)}(\x)&\leq\left(\frac{r_2}{r_1}\right)^{2n/p'}\rho^*_{p',Q_{r_2}(z)}(\x)\\
&\leq\left(\frac{r_2}{r_1}\right)^{2n/p'}[\rho]_{\A_{p,r_2}}\left(\rho_{p,Q_{r_2}(z)}\right)^*(\x)\\
&\leq\left(\frac{r_2}{r_1}\right)^{2n}[\rho]_{\A_{p,r_2}}\left(\rho_{p,Q_{r_1}(z)}\right)^*(\x).
\end{align*}
Therefore, $\rho$ is an $\A_{p,r_1}$-metric, and
\begin{equation}\label{r_2-r_1}
[\rho]_{\A_{p,r_1}}\leq\left(\frac{r_2}{r_1}\right)^{2n}[\rho]_{\A_{p,r_2}}.
\end{equation}
	
Suppose now that $\rho$ is an $\A_{p,r_1}$-metric. Fix $z\in\C^n$, and let
$$\Lambda=\left\{\nu\in z+\frac{r_1}{3}\mathbb{Z}^{2n}:Q_{r_1/3}(\nu)\cap Q_{r_2}(z)\neq\emptyset\right\}.$$
We consider the reducing operators $\R_{Q}$ and $\R^{\star}_Q$ of $\rho_{p,Q}$ and $\rho^*_{p',Q}$ respectively. For any $\x\in\C^d$,  \eqref{reduce} implies that
\begin{align*}
\left|\R_{Q_{r_2}(z)}\x\right|&\leq\sqrt{d}\rho_{p,Q_{r_2}(z)}(\x)\\
%&=\sqrt{d}\left(\frac{1}{v(Q_{r_2}(z))}\int_{Q_{r_2}(z)}(\rho_u(\x))^pdv(u)\right)^{1/p}\\
%&\leq\sqrt{d}\left(\frac{1}{v(Q_{r_2}(z))}\sum_{\nu\in\Lambda}\int_{Q_{r_1/3}(\nu)}(\rho_u(\x))^p
%	dv(u)\right)^{1/p}\\
&\leq\sqrt{d}\left(\frac{r_1}{3r_2}\right)^{2n/p}\sum_{\nu\in\Lambda}\rho_{p,Q_{r_1/3}(\nu)}(\x)\\
&\leq\sqrt{d}\left(\frac{r_1}{3r_2}\right)^{2n/p}\sum_{\nu\in\Lambda}
	\left|\R_{Q_{r_1/3}(\nu)}\x\right|.
\end{align*}
Similarly, \eqref{reduce-s} yields that
$$\left|\R^{\star}_{Q_{r_2}(z)}\x\right|
\leq\sqrt{d}\left(\frac{r_1}{3r_2}\right)^{2n/p'}\sum_{\nu\in\Lambda}
\left|\R^{\star}_{Q_{r_1/3}(\nu)}\x\right|.$$
Consequently,
\begin{align}\label{r_2r_2}
\nonumber\left\|\R_{Q_{r_2}(z)}\R^{\star}_{Q_{r_2}(z)}\right\|_{\op}
&=\sup_{\x\in\s_d}\left|\R_{Q_{r_2}(z)}\R^{\star}_{Q_{r_2}(z)}\x\right|\\
&\nonumber\leq\sqrt{d}\left(\frac{r_1}{3r_2}\right)^{2n/p}
	\sum_{\nu\in\Lambda}\sup_{\x\in\s_d}\left|\R_{Q_{r_1/3}(\nu)}\R^{\star}_{Q_{r_2}(z)}\x\right|\\
&\nonumber=\sqrt{d}\left(\frac{r_1}{3r_2}\right)^{2n/p}\sum_{\nu\in\Lambda}
	\left\|\R_{Q_{r_1/3}(\nu)}\R^{\star}_{Q_{r_2}(z)}\right\|_{\op}\\
&\nonumber=\sqrt{d}\left(\frac{r_1}{3r_2}\right)^{2n/p}\sum_{\nu\in\Lambda}
	\left\|\R^{\star}_{Q_{r_2}(z)}\R_{Q_{r_1/3}(\nu)}\right\|_{\op}\\
&\nonumber=\sqrt{d}\left(\frac{r_1}{3r_2}\right)^{2n/p}
	\sum_{\nu\in\Lambda}\sup_{\x\in\s_d}\left|\R^{\star}_{Q_{r_2}(z)}\R_{Q_{r_1/3}(\nu)}\x\right|\\
&\leq d\left(\frac{r_1}{3r_2}\right)^{2n}\sum_{\nu,\nu'\in\Lambda}
	\left\|\R^{\star}_{Q_{r_1/3}(\nu')}\R_{Q_{r_1/3}(\nu)}\right\|_{\op},
\end{align}
where $\s_d$ is the unit sphere of $\C^d$. Since $\rho$ is an $\A_{p,r_1}$-metric, we can use \eqref{r_2-r_1} to obtain that $[\rho]_{\A_{p,r_1/3}}\leq3^{2n}[\rho]_{\A_{p,r_1}}$. Then it follows from \eqref{R-Apr} that
$$\sup_{u\in\C^n}\left\|\R_{Q_{r_1/3}(u)}\R^{\star}_{Q_{r_1/3}(u)}\right\|_{\op}
\leq3^{2n}d[\rho]_{\A_{p,r_1}},$$
which, in conjunction with Lemma \ref{nunu'}, implies that for any $\nu,\nu'\in\Lambda$,
\begin{align*}
\left\|\R^{\star}_{Q_{r_1/3}(\nu')}\R_{Q_{r_1/3}(\nu)}\right\|_{\op}
&\leq\left\|\R^{\star}_{Q_{r_1/3}(\nu')}\R_{Q_{r_1/3}(\nu')}\right\|_{\op}
    \cdot\left\|\R^{-1}_{Q_{r_1/3}(\nu')}\R_{Q_{r_1/3}(\nu)}\right\|_{\op}\\
&\leq3^{2n}d^{3/2}[\rho]_{\A_{p,r_1}}
\left(3^{2n}[\rho]_{\A_{p,r_1}}\right)^{\frac{3\sqrt{2n}}{r_1}|\nu-\nu'|}.
\end{align*}
Inserting the above estimate into \eqref{r_2r_2} yields that
$$\left\|\R_{Q_{r_2}(z)}\R^{\star}_{Q_{r_2}(z)}\right\|_{\op}
\leq3^{2n}d^{5/2}\left(\frac{r_1}{3r_2}\right)^{2n}[\rho]_{\A_{p,r_1}}
\sum_{\nu,\nu'\in\Lambda}\left(3^{2n}[\rho]_{\A_{p,r_1}}\right)^{\frac{3\sqrt{2n}}{r_1}|\nu-\nu'|}.$$
It is easy to see that for any $\nu,\nu'\in\Lambda$, $|\nu-\nu'|\leq\sqrt{2n}\left(\frac{r_1}{3}+r_2\right)$, and the number $\sharp\Lambda$ of elements in the set $\Lambda$ satisfies $\sharp\Lambda\leq\left(2+\frac{3r_2}{r_1}\right)^{2n}$. Hence we can establish that
$$\left\|\R_{Q_{r_2}(z)}\R^{\star}_{Q_{r_2}(z)}\right\|_{\op}\leq
3^{4n^2\left(1+\frac{3r_2}{r_1}\right)}d^{\frac{5}{2}}
\left(2\sqrt{\frac{r_1}{r_2}}+3\sqrt{\frac{r_2}{r_1}}\right)^{4n}
[\rho]_{\A_{p,r_1}}^{1+2n\left(1+\frac{3r_2}{r_1}\right)}.$$
Since $z\in\C^n$ is arbitrary, in view of \eqref{R-Apr}, we conclude that $\rho$ is an $\A_{p,r_2}$-metric, and
$$[\rho]_{\A_{p,r_2}}\leq
3^{4n^2\left(1+\frac{3r_2}{r_1}\right)}d^{\frac{5}{2}}
\left(2\sqrt{\frac{r_1}{r_2}}+3\sqrt{\frac{r_2}{r_1}}\right)^{4n}
[\rho]_{\A_{p,r_1}}^{1+2n\left(1+\frac{3r_2}{r_1}\right)}.$$
The proof is complete.
\end{proof}

\medskip

%\noindent{\bf Acknowledgment.} %The authors would like to thank the referee for his/her helpful comments which improve the final version of presentation.

%\medskip 

%\noindent{\bf Statements and Declarations}

%\noindent{\bf Competing interest.} There are no competing interests.
%\noindent{\bf Data availability.} Not applicable.

\end{document}